\documentclass[11 pt]{article}
\usepackage{amssymb}
\usepackage{amsmath}
\usepackage{amsthm}
\allowdisplaybreaks
\usepackage{amscd}
\usepackage{graphicx}
\usepackage{hyperref}
\usepackage{geometry}
\usepackage{pxfonts}


\vsize=29.7truecm \hsize=21truecm \topmargin=-0.5truecm
\oddsidemargin=-0.5truecm \evensidemargin=0.5truecm
\textheight=22.5truecm \textwidth=16.5truecm \flushbottom
\def\vbar{\mathchoice{\vrule height6.3ptdepth-.5ptwidth.8pt\kern- .8pt}
{\vrule height6.3ptdepth-.5ptwidth.8pt\kern-.8pt} {\vrule
height4.1ptdepth-.35ptwidth.6pt\kern-.6pt} {\vrule
height3.1ptdepth-.25ptwidth.5pt\kern-.5pt}}

\def\<{\langle}
\def\>{\rangle}

\def\c{\cdot}

\def\nw{\nwarrow}
\def\ne{\nearrow}

\newtheorem{thm}{Theorem}[section]
\newtheorem{lem}[thm]{Lemma}
\newtheorem{cor}[thm]{Corollary}
\newtheorem{pro}[thm]{Proposition}
\newtheorem{ex}[thm]{Example}

\theoremstyle{definition}
\newtheorem{defi}{Definition}[section]

\theoremstyle{remark}
\newtheorem{rmk}{Remark}[section]
\begin{document}
\title{  $3$-L-dendriform algebras and generalized derivations}
\author{\bf Taoufik Chtioui and Sami Mabrouk}
\author{Taoufik Chtioui$^{1}$
 \footnote{Corresponding author,  E-mail: chtioui.taoufik@yahoo.fr}
and Sami Mabrouk$^{2}$
 \footnote{Corresponding author,  E-mail: mabrouksami00@yahoo.fr}
\\
{\small 1.  Faculty of Sciences, University of Sfax, BP 1171, 3000 Sfax, Tunisia}\\
{\small 1.  University of Gafsa, Faculty of Sciences Gafsa, 2112 Gafsa, Tunisia}
}
\date{}
\maketitle
\begin{abstract}
The main goal of this paper is to introduce the notion of $3$-L-dendriform algebras  which are  the dendriform version of $3$-pre-Lie algebras. In fact they are the algebraic structures behind the $\mathcal{O}$-operator of $3$-pre-Lie algebras.
They can be also regarded as the ternary analogous of L-dendriform algebras.
Moreover, we study the generalized derivations of $3$-L-dendriform algebras. Finally, we  explore the spaces of quasi-derivations, the centroids and
the quasi-centroids  and give some properties.
\end{abstract}

\textbf{Key words}: 3-Lie algebras, 3-pre-Lie algebras, $3$-L-dendriform algebras, Representations, $\mathcal{O}$-operator, Generalized derivations.

\textbf{Mathematics Subject Classification}: 17A40,17A42,17B15.

\numberwithin{equation}{section}
\section*{Introduction}
A dendriform algebra is a module equipped with two binary products whose sum is associative. This concept was introduced by Loday in the late 1990s in the study of periodicity in algebraic $K$-theory \cite{L1}. Several years later, Loday and Ronco  introduced the concept of a tridendriform algebra from their study of algebraic topology \cite{AL1}. It is a module with three binary operations whose sum is associative. Afterward, quite a few similar algebraic structures were introduced, such as the quadi-algebra and ennea algebra \cite{AL}. The notion of splitting of associativity was introduced by Loday  to describe this phenomena in general for the associative operation (see \cite{L4, L5, L6}, for more details).

 A similar two-part and three-part splittings of the Lie-algebra are found to be the pre-Lie algebra and the post-Lie algebra repectively from operadic study  with applications to integrable systems \cite{burde, vallette}. Further, a two-part and three-part splittings of the associative commutative operation give the Zinbiel algebra  and commutative tridendriform algebra respectively \cite{L1, zinbiel}.
Analogues of the dendriform  and tridendriform algebra for  Jordan algebra, alternative algebra and Poisson algebra have also been obtained \cite{Bai3, prejordan, jdendriform}.

Some $n$-ary algebraic structures like  $n$-Lie algebras and $n$-associative algebras, which have been widely studied in the last few years,
were also decomposed into two and three operations (for $n=3$) giving
$3$-pre-Lie algebras and partially dendriform $3$-algebras \cite{PeiJun Chengming Bai and Li Guo}, using the
procedure that splits the operations in algebraic operads.

Rota-Baxter operator (of weight zero) was introduced independently in
the 1980s as the operator form of the classical Yang-Baxter equation ( see\cite{Baxter}), named after the physicists Yang
and Baxter. It  gives rise to splittings of various algebraic structures. This is the case, for example,  for associative algebras, giving the dendriform and tridendriform algebras, for Lie algebras, giving the pre-Lie and post-Lie algebras and for $3$-Lie algebras giving $3$-pre-Lie algebras.In \cite{Kupershmidt}, Kuperschmidt introduced the notion of Kuperschmidt  operators called also $\mathcal{O}$-operators which is generalisation of Rota-Baxter operators.

The following is an outline of the paper.  In section 1, we summarize  some definitions and known results  about $3$-Lie algebras  and $3$-pre-Lie algebras which will be useful in the sequel.  In section 2, we introduce the notion of $3$-L-dendriform algebra. We establish  that it has tow associated $3$-pre-Lie algebras (horizontal and vertical). They have the same sub-adjacent $3$-Lie algebra. In addition, the left multiplication  operator of  the first operation (called north-west: $\nw$) and the right multiplication operator of the second operation (called north-east: $\ne$) consist a bimodule of the associated horizontal $3$-pre-Lie algebra.
Section 3 is devoted to the study of generalized derivations on $3$-L-dendriform algebras, $3$-pre-Lie algebras and $3$-Lie algebras.

Throughout this paper $\mathbb{K}$ is a field of characteristic $0$ and all vector spaces are over $\mathbb{K}$.
\section{Preliminaries and basics}
\label{sec:bas}

In this section, we give some general results on $3$-Lie algebras  and  $3$-pre-Lie algebras which are useful throughout this paper.

\begin{defi}\cite{Filippov}
  A $3$-Lie algebra  consists  of a vector space $A$ equipped with a skew-symmetric linear map called $3$-Lie bracket $[\cdot,\cdot,\cdot]:
\otimes^3 A\rightarrow A$ such that the following Fundamental Identity holds (for $x_i\in A, 1\leq i\leq 5$)
\begin{equation}\label{eq:de1}
[x_1,x_2,[x_3,x_4,x_5]]=[[x_1,x_2,x_3],x_4,x_5]+[x_3,[x_1,x_2,x_4],x_5]+[x_3,x_4,[x_1,x_2,x_5]]
\end{equation}
\end{defi}
In other words, for $x_1, x_2\in A$, the operator
\begin{align}\label{eq:adjoint}
ad_{x_1,x_2}:A\to A, \quad ad_{x_1,x_2}x:=[x_1,x_2,x], \quad \forall x\in A,
\end{align}
is a derivation in the sense that
$$ ad_{x_1,x_2}[x_3,x_4,x_5]=[ad_{x_1,x_2}x_3,x_4,x_5] +[x_3,ad_{x_1,x_2}x_4,x_5]+[x_3,x_4,ad_{x_1,x_2}x_5], \forall x_3, x_4, x_5\in A.$$
A morphism between 3-Lie algebras is a linear map that preserves the 3-Lie brackets.
\begin{ex}\cite{Filippov} Consider $4$-dimensional $3$-Lie algebra $A$ generated by $(e_1,e_2,e_3,e_4)$  with  the following multiplication
\begin{align*}
          [e_1,e_2,e_3]= e_4, \
[e_1,e_2,e_4]= e_3,\
[e_1,e_3,e_4]= e_2, \
     [ e_2,e_3,e_4]= e_1.
\end{align*}
\end{ex}


The notion of a representation of an $n$-Lie algebra was introduced in \cite{repKasymov}. See also \cite{rep}.
\begin{defi}\label{defi:rep}
 A representation of a 3-Lie algebra $A$ on a vector space $V$ is a skew-symmetric linear map $\rho: \otimes^2A\rightarrow gl(V)$ satisfying
\begin{enumerate}
\item [\rm(i)] $\rho (x_1,x_2)\rho(x_3,x_4)-\rho(x_3,x_4)\rho(x_1,x_2)=
\rho([x_1,x_2,x_3],x_4)-\rho([x_1,x_2,x_4],x_3)$,
\item [\rm (ii)]$\rho ([x_1,x_2,x_3],x_4)=\rho(x_1,x_2)\rho(x_3,x_4)+\rho(x_2,x_3)\rho(x_1,x_4)+\rho(x_3,x_1)\rho(x_2,x_4)$,
\end{enumerate}
for $x_i\in A, 1\leq i\leq 4$.
\end{defi}

In fact,  $(V,\rho)$ is a representation of a 3-Lie algebra $(A,[\c,\c,\c])$ if and only if there is a $3$-Lie algebra structure
on the direct sum $A\oplus V$ of the underlying vector spaces $A$ and $V$ given by
\begin{equation}\label{eq:sum}
[x_1+v_1,x_2+v_2,x_3+v_3]_{A\oplus V}=[x_1,x_2,x_3]+\rho(x_1,x_2)v_3+\rho(x_3,x_1)v_2+\rho(x_2,x_3)v_1,
\end{equation}
for $x_i\in A, v_i\in V, 1\leq i\leq 3$. We denote it by $A\ltimes_\rho V.$

Now we recall
the definition of $3$-pre-Lie algebra and  exhibit construction results in terms of $\mathcal O$-operators  on $3$-Lie algebras (for more details see \cite{Bai1}, \cite{PeiJun Chengming Bai and Li Guo})

\begin{defi}\label{defi:o}
Let $(A,[\cdot,\cdot,\cdot])$ be a $3$-Lie algebra and $(V,\rho)$
a representation.  A linear operator $T:V\rightarrow A$ is called
an $\mathcal O$-operator associated to $( V,\rho)$ if $T$
satisfies
\begin{equation}\label{eq:Ooperator}
 [Tu,Tv,Tw]=T\left(\rho(Tu,Tv)w+\rho(Tv,Tw)u+\rho(Tw,Tu)v\right),\quad \forall u,v,w\in V.
\end{equation}
\end{defi}

\begin{defi}
A $3$-pre-Lie algebra is a pair $(A,\{\cdot,\cdot,\cdot\})$ consisting of a
a vector space $A$ and  a linear map $\{\cdot,\cdot,\cdot\}:A\otimes A\otimes A\rightarrow A$ such that the following identities hold:
\begin{eqnarray}
\{x,y,z\}&=&-\{y,x,z\},\label{3-pre-Lie 0}\\
\nonumber\{x_1,x_2,\{x_3,x_4,x_5\}\}&=&\{[x_1,x_2,x_3]^C,x_4,x_5\}+\{x_3,[x_1,x_2,x_4]^C,x_5\}\\
&&+\{x_3,x_4,\{x_1,x_2,x_5\}\},\label{3-pre-Lie 1}\\
\nonumber\{ [x_1,x_2,x_3]^C,x_4, x_5\}&=&\{x_1,x_2,\{ x_3,x_4, x_5\}\}+\{x_2,x_3,\{ x_1,x_4,x_5\}\}\\
&&+\{x_3,x_1,\{ x_2,x_4, x_5\}\},\label{3-pre-Lie 2}
\end{eqnarray}
 where $x,y,z, x_i\in A, 1\leq i\leq 5$ and $[\cdot,\cdot,\cdot]_C$ is defined by
\begin{equation}
[x,y,z]^C=\{x,y,z\}+\{y,z,x\}+\{z,x,y\},\quad \forall  x,y,z\in A.\label{eq:3cc}
\end{equation}
\end{defi}

\begin{ex}\label{Example1}
  Let $A$ be a $4$-dimensional vector space generated by $(e_1,e_2,e_3,e_4)$ and consider the bracket $\{\cdot,\cdot,\cdot\}:A\otimes A\otimes A\to A$ given by
$$\{e_1,e_2,e_3\}=-\{e_1,e_2,e_4\}=e_1+e_2$$
and all the other brackets are zero. Then  $(A,\{\cdot,\cdot,\cdot\})$ is a $3$-pre-Lie algebra.
\end{ex}

\begin{ex}\label{Example2}
Let $A$ be a $4$-dimensional vector space  generated by $(e_1,e_2,e_3,e_4)$ and let the bracket $\{\cdot,\cdot,\cdot\}:A\otimes A\otimes A\to A$ given by
$$\left\{
    \begin{array}{ll}
    \{e_1,e_2,e_3\}=e_4 ,  & \hbox{} \\
       \{e_1,e_2,e_4\}=e_3,   & \hbox{} \\
       \{e_1,e_3,e_3\}=-\{e_1,e_4,e_4\}=e_2  , & \hbox{} \\
    \{e_2,e_3,e_3\}=-\{e_2,e_4,e_4\}=e_1  ,  & \hbox{}
    \end{array}
  \right.
$$
 and all the other brackets are zero. Then  $(A,\{\cdot,\cdot,\cdot\})$ is a $3$-pre-Lie algebra.
\end{ex}
\begin{pro}
Let $(A,\{\cdot,\cdot,\cdot\})$ be a $3$-pre-Lie algebra. Then the induced $3$-commutator given by Eq.~\eqref{eq:3cc} defines
a $3$-Lie algebra.
\end{pro}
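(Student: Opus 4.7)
The plan is to verify that the induced bracket $[\cdot,\cdot,\cdot]^C$ satisfies the two defining properties of a $3$-Lie algebra, namely total skew-symmetry and the Fundamental Identity \eqref{eq:de1}.

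For skew-symmetry, I would first observe that the definition \eqref{eq:3cc} makes $[x,y,z]^C$ manifestly invariant under the cyclic permutation $(x,y,z)\mapsto(y,z,x)$. Therefore it suffices to check skew-symmetry in a single transposition, say the swap of the first two entries. Writing out
\[
[y,x,z]^C=\{y,x,z\}+\{x,z,y\}+\{z,y,x\}
\]
and applying axiom \eqref{3-pre-Lie 0} to each term (noting that in $\{x,z,y\}$ the first two slots are $x,z$, and similarly for the others), I get exactly $-[x,y,z]^C$. Combined with cyclic invariance, this yields full alternating symmetry.

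For the Fundamental Identity, the strategy is a direct expansion. I would substitute the definition \eqref{eq:3cc} on both sides of
\[
[x_1,x_2,[x_3,x_4,x_5]^C]^C = [[x_1,x_2,x_3]^C,x_4,x_5]^C+[x_3,[x_1,x_2,x_4]^C,x_5]^C+[x_3,x_4,[x_1,x_2,x_5]^C]^C,
\]
so that every term becomes a nested bracket $\{\cdot,\cdot,\{\cdot,\cdot,\cdot\}\}$ in the pre-Lie operation with a $[\cdot,\cdot,\cdot]^C$ sitting in one slot. The axioms \eqref{3-pre-Lie 1} and \eqref{3-pre-Lie 2} are designed precisely to control such expressions: \eqref{3-pre-Lie 1} rewrites $\{x_1,x_2,\{x_3,x_4,x_5\}\}$ in terms of brackets with $[\,\cdot\,,\,\cdot\,,\,\cdot\,]^C$ in the first or second slot, while \eqref{3-pre-Lie 2} does the converse. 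I would use these to move all the $[\cdot,\cdot,\cdot]^C$'s to a common position, then check that the difference of the two sides cancels.

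The main obstacle is purely bookkeeping: after expansion both sides become sums of roughly a few dozen terms $\{x_{\sigma(1)},x_{\sigma(2)},\{x_{\sigma(3)},x_{\sigma(4)},x_{\sigma(5)}\}\}$ indexed by permutations $\sigma\in S_5$, and one must match them carefully using the skew-symmetry \eqref{3-pre-Lie 0} of the pre-Lie bracket in its first two slots. To keep the computation manageable, I would organise the terms according to which variable sits in the innermost last slot (say $x_5$), using cyclic symmetry of $[\cdot,\cdot,\cdot]^C$ to reduce the number of independent expressions. Once these canonical forms are collected on both sides, each group of six terms (one for each cyclic rearrangement of the outer triple) can be seen to match via a single application of either \eqref{3-pre-Lie 1} or \eqref{3-pre-Lie 2}, completing the verification.
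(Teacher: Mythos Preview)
The paper does not actually give a proof of this proposition; it is stated without proof as a known fact (implicitly imported from the references on $3$-pre-Lie algebras such as \cite{Bai1,PeiJun Chengming Bai and Li Guo}). Your outline is the standard direct verification and is correct: skew-symmetry of $[\cdot,\cdot,\cdot]^C$ follows exactly as you say from cyclic invariance plus \eqref{3-pre-Lie 0}, and the Fundamental Identity is obtained by expanding both sides via \eqref{eq:3cc} and reducing with repeated use of \eqref{3-pre-Lie 1} and \eqref{3-pre-Lie 2}. So there is nothing to compare beyond noting that you are supplying a proof the paper omits.
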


\begin{defi}
Let $(A,\{\cdot,\cdot,\cdot\})$ be a $3$-pre-Lie algebra. The $3$-Lie algebra $(A,[\cdot,\cdot,\cdot]^C)$
is called the  sub-adjacent $3$-Lie algebra of $(A,\{\cdot,\cdot,\cdot\})$ and $(A,\{\cdot,\cdot,\cdot\})$ is called a compatible
$3$-pre-Lie algebra of the $3$-Lie algebra $(A,[\cdot,\cdot,\cdot]^C)$.
\end{defi}

Let $(A,\{\cdot,\cdot,\cdot\})$ be a $3$-pre-Lie algebra. Define a skew-symmetric  linear map $L: \otimes^2A\rightarrow  gl(A)$
by
\begin{equation}\label{eq:R}
L(x,y)z=\{x,y,z\},\quad \forall x,y,z\in A.
\end{equation}

By the definitions of a $3$-pre-Lie algebra and a representation of a $3$-Lie algebra, we immediately obtain

\begin{pro}
With the above notations, $(A,L)$ is a representation of the
$3$-Lie algebra $(A,[\cdot,\cdot,\cdot]^C)$. On the other hand,
let $A$ be a vector space with a linear map
$\{\cdot,\cdot,\cdot\}:A\otimes A\otimes A\rightarrow A$
satisfying
 Eq.~\eqref{3-pre-Lie 0}. Then $(A,\{\cdot,\cdot,\cdot\}) $ is a $3$-pre-Lie algebra if $[\cdot,\cdot,\cdot]^C$ defined by Eq~\eqref{eq:3cc} is a $3$-Lie algebra and the left multiplication $L$ defined by Eq.~\eqref{eq:R}
gives a representation of this $3$-Lie algebra.
\end{pro}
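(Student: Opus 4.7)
The plan is to recognize that, once we unwind the definition $L(x,y)z=\{x,y,z\}$, the two axioms (i) and (ii) of a representation of a $3$-Lie algebra (Definition~\ref{defi:rep}) become, when applied to an arbitrary element $x_5\in A$, the two structure identities \eqref{3-pre-Lie 1} and \eqref{3-pre-Lie 2} defining a $3$-pre-Lie algebra, modulo the skew-symmetry \eqref{3-pre-Lie 0}. Thus the proposition really asserts a term-by-term translation between the two languages, and both directions can be handled simultaneously by this translation.

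First, I would note that the skew-symmetry of $L$ in its two arguments is immediate from \eqref{3-pre-Lie 0}, so $L$ is a well-defined candidate for a representation. Next, I would evaluate condition (ii) of Definition~\ref{defi:rep} on $x_5$; substituting $L(x,y)z=\{x,y,z\}$ turns it verbatim into \eqref{3-pre-Lie 2}, so these two are manifestly equivalent (assuming \eqref{3-pre-Lie 0}). For condition (i) evaluated on $x_5$, the left-hand side becomes $\{x_1,x_2,\{x_3,x_4,x_5\}\}-\{x_3,x_4,\{x_1,x_2,x_5\}\}$, and the right-hand side is $\{[x_1,x_2,x_3]^C,x_4,x_5\}-\{[x_1,x_2,x_4]^C,x_3,x_5\}$. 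Rewriting \eqref{3-pre-Lie 1} by transposing the term $\{x_3,x_4,\{x_1,x_2,x_5\}\}$ to the other side, and then applying the skew-symmetry \eqref{3-pre-Lie 0} to the term $\{x_3,[x_1,x_2,x_4]^C,x_5\}$ to turn it into $-\{[x_1,x_2,x_4]^C,x_3,x_5\}$, produces exactly condition (i). This rearrangement is reversible, so modulo \eqref{3-pre-Lie 0} condition (i) is equivalent to \eqref{3-pre-Lie 1}.

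With those two equivalences in hand, the forward statement (if $(A,\{\cdot,\cdot,\cdot\})$ is a $3$-pre-Lie algebra then $(A,L)$ is a representation of the sub-adjacent $3$-Lie algebra) and the backward one (if, in addition to \eqref{3-pre-Lie 0}, $(A,[\cdot,\cdot,\cdot]^C)$ is a $3$-Lie algebra and $L$ is a representation of it, then $(A,\{\cdot,\cdot,\cdot\})$ is a $3$-pre-Lie algebra) follow at once. The fact that the sub-adjacent bracket $[\cdot,\cdot,\cdot]^C$ actually forms a $3$-Lie algebra in the forward direction is precisely the preceding Proposition, so no additional argument is required there.

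The only genuine subtlety I anticipate is keeping the signs straight when moving terms across and using \eqref{3-pre-Lie 0} to convert condition (i) into \eqref{3-pre-Lie 1}; beyond that, the proof is a purely mechanical rewriting, and its real content is simply the dictionary $L(x,y)z=\{x,y,z\}$.
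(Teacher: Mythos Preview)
Your proposal is correct and follows exactly the approach the paper intends: the paper simply asserts the result ``by the definitions of a $3$-pre-Lie algebra and a representation of a $3$-Lie algebra, we immediately obtain'' without writing out any details, and your argument is precisely the unwinding of that immediacy. The dictionary you set up --- condition~(ii) of Definition~\ref{defi:rep} applied to $x_5$ is literally \eqref{3-pre-Lie 2}, and condition~(i) applied to $x_5$ is \eqref{3-pre-Lie 1} rearranged using \eqref{3-pre-Lie 0} --- is exactly what the paper leaves implicit.
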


\begin{pro}\label{pro:3preLieT}
Let $(A,[\cdot,\cdot,\cdot])$ be a $3$-Lie algebra and $(V,\rho)$ a representation. Suppose that the linear map $T:V\rightarrow A$ is an $\mathcal O$-operator associated
to $(V,\rho)$. Then there exists a $3$-pre-Lie algebra structure on $V$ given by
\begin{equation}
\{u,v,w\}=\rho(Tu,Tv)w,\quad\forall ~ u,v,w\in V.
\end{equation}
\end{pro}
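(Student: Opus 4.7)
The strategy is to verify the three axioms \eqref{3-pre-Lie 0}, \eqref{3-pre-Lie 1}, \eqref{3-pre-Lie 2} directly from the definition $\{u,v,w\}=\rho(Tu,Tv)w$, using only the skew-symmetry of $\rho$, the two representation axioms of Definition~\ref{defi:rep}, and the $\mathcal{O}$-operator identity \eqref{eq:Ooperator}. The skew-symmetry axiom \eqref{3-pre-Lie 0} is immediate from the skew-symmetry of $\rho$ in its first two slots.

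Before attacking the other two identities I would record the key consequence of the $\mathcal{O}$-operator property: combining \eqref{eq:3cc} with \eqref{eq:Ooperator} gives
\begin{equation*}
T\bigl([u,v,w]^C\bigr)=T\bigl(\rho(Tu,Tv)w+\rho(Tv,Tw)u+\rho(Tw,Tu)v\bigr)=[Tu,Tv,Tw].
\end{equation*}
This small lemma is what converts every bracket $[\,\cdot\,,\cdot\,,\cdot\,]^C$ that appears inside a $\rho$ on the right-hand sides of \eqref{3-pre-Lie 1} and \eqref{3-pre-Lie 2} into the genuine $3$-Lie bracket on $A$, allowing the representation axioms to be applied.

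For axiom \eqref{3-pre-Lie 1}, the left-hand side $\{x_1,x_2,\{x_3,x_4,x_5\}\}$ unfolds to $\rho(Tx_1,Tx_2)\rho(Tx_3,Tx_4)x_5$, while the right-hand side, after applying the identity above and the skew-symmetry of $\rho$, becomes
\begin{equation*}
\rho([Tx_1,Tx_2,Tx_3],Tx_4)x_5-\rho([Tx_1,Tx_2,Tx_4],Tx_3)x_5+\rho(Tx_3,Tx_4)\rho(Tx_1,Tx_2)x_5.
\end{equation*}
Transposing the last term, one sees this is exactly representation axiom~(i) of Definition~\ref{defi:rep} evaluated on $x_5$. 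Similarly, for \eqref{3-pre-Lie 2} the same translation turns the left-hand side into $\rho([Tx_1,Tx_2,Tx_3],Tx_4)x_5$ and the right-hand side into $\rho(Tx_1,Tx_2)\rho(Tx_3,Tx_4)x_5+\rho(Tx_2,Tx_3)\rho(Tx_1,Tx_4)x_5+\rho(Tx_3,Tx_1)\rho(Tx_2,Tx_4)x_5$, which is representation axiom~(ii) applied to $x_5$.

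The only delicate point is a bookkeeping one: when the bracket $[x_1,x_2,x_4]^C$ appears in the second argument of $\rho$, one must use the skew-symmetry of $\rho$ to bring it into the first argument, producing the sign seen above. Beyond that, the proof is a direct, mechanical translation between the two sets of axioms via the $\mathcal{O}$-operator equation, and no new structural ingredient is needed.
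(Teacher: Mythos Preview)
Your proof is correct and follows the standard route. Note, however, that the paper itself does not supply a proof of this proposition: it is stated as a recalled result from \cite{Bai1} and \cite{PeiJun Chengming Bai and Li Guo}, so there is no in-paper argument to compare against. Your key observation $T([u,v,w]^C)=[Tu,Tv,Tw]$ together with the direct translation of axioms \eqref{3-pre-Lie 1} and \eqref{3-pre-Lie 2} into the representation identities (i) and (ii) of Definition~\ref{defi:rep} is exactly the expected argument, and the sign bookkeeping you flag (swapping the second slot of $\rho$ to the first via skew-symmetry) is the only subtlety.
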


\begin{cor}
With the above conditions,  $(V,[\cdot,\cdot,\cdot]_C)$ is a $3$-Lie
algebra as the sub-adjacent $3$-Lie algebra of the $3$-pre-Lie
algebra given in Proposition \ref{pro:3preLieT}, and $T$ is a $3$-Lie algebra morphism from $(V,[\cdot,\cdot,\cdot]_C)$ to $(A,[\cdot,\cdot,\cdot])$. Furthermore,
$T(V)=\{Tv|v\in V\}\subset A$ is a $3$-Lie subalgebra of $A$ and there is an induced $3$-pre-Lie algebra structure $\{\cdot,\cdot,\cdot\}_{T(V)}$ on
$T(V)$ given by
\begin{equation}
\{Tu,Tv,Tw\}_{T(V)}:=T\{u,v,w\},\quad\;\forall u,v,w\in V.
\end{equation}
\end{cor}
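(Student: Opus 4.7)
The plan is to treat the four claims in sequence, using Proposition~\ref{pro:3preLieT} as a black box and exploiting the $\mathcal{O}$-operator identity \eqref{eq:Ooperator} for the compatibility with $T$.

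\medskip

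\textbf{Sub-adjacent $3$-Lie structure and morphism property.} For the first claim I would simply invoke the earlier proposition stating that every $3$-pre-Lie algebra yields a sub-adjacent $3$-Lie algebra via the $3$-commutator \eqref{eq:3cc}; applied to the $3$-pre-Lie algebra produced on $V$ by Proposition~\ref{pro:3preLieT}, this gives $(V,[\cdot,\cdot,\cdot]_C)$ as a $3$-Lie algebra. To show that $T$ is a morphism I would compute directly:
\begin{align*}
T[u,v,w]_C &= T\bigl(\{u,v,w\}+\{v,w,u\}+\{w,u,v\}\bigr)\\
&= T\bigl(\rho(Tu,Tv)w+\rho(Tv,Tw)u+\rho(Tw,Tu)v\bigr)\\
&= [Tu,Tv,Tw],
\end{align*}
where the last line is precisely the $\mathcal{O}$-operator identity \eqref{eq:Ooperator}.

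\medskip

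\textbf{$T(V)$ as a $3$-Lie subalgebra.} This is an immediate consequence of the morphism property just established: for any $Tu,Tv,Tw\in T(V)$, one has $[Tu,Tv,Tw]=T[u,v,w]_C\in T(V)$.

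\medskip

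\textbf{Induced $3$-pre-Lie structure on $T(V)$.} The main substantive step is to check that the formula $\{Tu,Tv,Tw\}_{T(V)}:=T\{u,v,w\}=T\rho(Tu,Tv)w$ gives a well-defined ternary operation on $T(V)$, independent of the chosen preimages $u,v,w$; this is the step I expect to be the main obstacle, since in general $T$ need not be injective. I would handle it by showing that $T\rho(Tu,Tv)w$ depends on $u,v,w$ only through $Tu,Tv,Tw$: fixing $Tu$ and $Tv$, the map $w\mapsto T\rho(Tu,Tv)w$ can be rewritten, via \eqref{eq:Ooperator} and the symmetry of the bracket in the first two slots, so that $w$ enters only via $Tw$; a parallel argument handles independence from the preimages of $Tu$ and $Tv$ using skew-symmetry \eqref{3-pre-Lie 0} and \eqref{eq:Ooperator}.

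\medskip

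\textbf{Verification of the $3$-pre-Lie axioms on $T(V)$.} Once well-definedness is in hand, the axioms \eqref{3-pre-Lie 0}--\eqref{3-pre-Lie 2} for $\{\cdot,\cdot,\cdot\}_{T(V)}$ follow formally by pushing forward the corresponding axioms on $V$ along $T$, and invoking the already-established morphism identity $T[u,v,w]_C=[Tu,Tv,Tw]$ to translate the sub-adjacent bracket that appears in \eqref{3-pre-Lie 1}--\eqref{3-pre-Lie 2}. Concretely, each $T$ applied to an identity on $V$ produces the corresponding identity on $T(V)$, so no new calculation beyond the well-definedness check is needed.
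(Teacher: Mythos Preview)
The paper does not prove this corollary; it is stated without proof as part of the background recalled from \cite{Bai1} and \cite{PeiJun Chengming Bai and Li Guo}, so there is no argument in the paper to compare yours against.

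Your argument is correct. One small simplification: the well-definedness check in the first two slots of $\{Tu,Tv,Tw\}_{T(V)}=T\rho(Tu,Tv)w$ needs no ``parallel argument'', since the expression visibly depends on $u$ and $v$ only through $Tu$ and $Tv$; only the third slot genuinely requires work, and your use of \eqref{eq:Ooperator} to rewrite $T\rho(Tu,Tv)w=[Tu,Tv,Tw]-T\rho(Tv,Tw)u-T\rho(Tw,Tu)v$ (where $w$ now enters only via $Tw$) handles that correctly.
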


\begin{pro}\label{pro:preLieOoper}
Let $(A,[\cdot,\cdot,\cdot])$ be a $3$-Lie algebra. Then there exists a compatible $3$-pre-Lie algebra if and only if there exists an invertible $\mathcal O$-operator on $A$.
\end{pro}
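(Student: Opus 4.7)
The plan is to prove the two implications separately, relying on Proposition \ref{pro:3preLieT} for the forward direction and on the representation $L$ coming from left multiplication for the converse. The common thread is that the $\mathcal{O}$-operator equation \eqref{eq:Ooperator} becomes, after the appropriate interpretation, precisely the skew-symmetrization identity \eqref{eq:3cc} linking a $3$-pre-Lie bracket to its sub-adjacent $3$-Lie bracket.

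For the "if" direction, I will assume $(A,\{\cdot,\cdot,\cdot\})$ is a compatible $3$-pre-Lie algebra, so $[x,y,z]=\{x,y,z\}+\{y,z,x\}+\{z,x,y\}$. By the earlier proposition, $L$ defines a representation of the sub-adjacent $3$-Lie algebra, which by compatibility coincides with the given $(A,[\cdot,\cdot,\cdot])$. Taking $T=\mathrm{id}_A$, which is trivially invertible, and substituting $T=\mathrm{id}$ with $\rho=L$ into \eqref{eq:Ooperator}, both sides reduce to \eqref{eq:3cc}; hence $\mathrm{id}_A$ is an invertible $\mathcal{O}$-operator on $A$.

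For the "only if" direction, let $T:V\to A$ be an invertible $\mathcal{O}$-operator associated to some representation $(V,\rho)$. Proposition \ref{pro:3preLieT} furnishes a $3$-pre-Lie structure $\{u,v,w\}_V=\rho(Tu,Tv)w$ on $V$, which I transport along the bijection $T$ by setting
\[
\{x,y,z\}_A := T\{T^{-1}x,\,T^{-1}y,\,T^{-1}z\}_V, \qquad x,y,z\in A.
\]
The axioms \eqref{3-pre-Lie 0}--\eqref{3-pre-Lie 2} are then preserved automatically, since $T$ is by construction a $3$-pre-Lie isomorphism from $(V,\{\cdot,\cdot,\cdot\}_V)$ to $(A,\{\cdot,\cdot,\cdot\}_A)$.

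The main point, and indeed the only substantive computation, is to check that the sub-adjacent bracket of $\{\cdot,\cdot,\cdot\}_A$ coincides with the given $[\cdot,\cdot,\cdot]$. Writing $u=T^{-1}x$, $v=T^{-1}y$, $w=T^{-1}z$ and invoking \eqref{eq:Ooperator}, one finds
\[
[x,y,z]_C^A \;=\; T\bigl(\rho(Tu,Tv)w+\rho(Tv,Tw)u+\rho(Tw,Tu)v\bigr) \;=\; [Tu,Tv,Tw] \;=\; [x,y,z],
\]
which is exactly the compatibility required. This single application of \eqref{eq:Ooperator} is the place where the hypothesis that $T$ is an $\mathcal{O}$-operator is genuinely used; every other step is formal transport along the isomorphism $T$.
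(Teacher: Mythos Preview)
Your argument is correct in content, though you have the ``if'' and ``only if'' labels reversed: in ``$P$ if and only if $Q$'', the ``if'' direction assumes $Q$ (here, the existence of an invertible $\mathcal{O}$-operator) and deduces $P$, not the other way around. Swapping the two headers fixes this.

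As for comparison with the paper: the paper does not actually prove Proposition~\ref{pro:preLieOoper}; it is stated in the preliminaries as a known result. However, the paper does prove the exact analogue one level up (Proposition~\ref{3Lden by invert O-op}, for compatible $3$-L-dendriform structures on a $3$-pre-Lie algebra), and the strategy there is identical to yours: given an invertible $\mathcal{O}$-operator $T:V\to A$, build the induced structure on $V$ and transport it to $A$ via $T$, then verify compatibility using the $\mathcal{O}$-operator identity; conversely, given a compatible structure, observe that $\mathrm{id}_A$ is an $\mathcal{O}$-operator for the adjoint-type representation. So your proof is precisely the natural specialization of the paper's own method.
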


\begin{defi}\cite{Sheng Yunhe and Rong Tang}
 A  representation of a $3$-pre-Lie algebra $(A,\{\cdot,\cdot,\cdot\})$   on a vector space $V$ consists of a pair $(l,r)$, where $l:\wedge^2 A \rightarrow gl(V)$ is a representation of the $3$-Lie algebra $A^c$ on $V$ and $r:\otimes^2 A \rightarrow gl(V)$ is a linear map such that  for all $x_1,x_2,x_3,x_4\in A$, the following equalities hold:
\begin{eqnarray}
 l(x_1,x_2)r(x_3,x_4) &=&r(x_3,x_4)\mu(x_1,x_2)
                          \label{rep1}+r([x_1,x_2,x_3]_C,x_4)+ r(x_3,\{x_1,x_2,x_4\}),\\
\label{rep2} r([x_1,x_2,x_3]_C,x_4)&=&l(x_1,x_2)r(x_3,x_4)+l(x_2,x_3)r(x_1,x_4)+
l(x_3,x_1)r(x_2,x_4),\\
 r(x_1,\{x_2,x_3,x_4\}) &=&r(x_3,x_4)\mu(x_1,x_2)-r(x_2,x_4)\mu(x_1,x_3)
                         + l(x_2,x_3)r(x_1,x_4),  \label{rep3}\\
r(x_3,x_4)\mu(x_1,x_2) &=&
                       l(x_1,x_2)r(x_3,x_4)-r(x_2,\{x_1,x_3,x_4\})+
                        r(x_1,\{x_2,x_3,x_4\}),  \label{rep4}
\end{eqnarray}
\end{defi}
where $\mu(x,y)=l(x,y)+r(x,y)-r(y,x)$, for any $x,y \in A$.

Define the left multiplication $L:\wedge^2 A\longrightarrow gl(A)$ by
$L(x,y)z=\{x,y,z\}$ for all $x,y,z\in A$. Then $(A,L)$ is a representation of the
$3$-Lie algebra $A^c$. Moreover, we define the right multiplication
$R:\otimes^2 A \to  gl(A)$ by $R(x,y)z=\{z,x,y\}$.
It is obvious that $(A,L,R)$ is a
representation of a $3$-pre-Lie algebra on itself, which is called the adjoint
representation.

\begin{pro}
Let $(A,\{\c,\c,\c\})$ be a $3$-pre-Lie algebra, $V$  a vector space and $l,r:
\otimes^2A\rightarrow  gl(V)$  two  linear
maps. Then $(V,l,r)$ is a representation of $A$ if and only if there
is a $3$-pre-Lie algebra structure $($called the semi-direct product$)$
on the direct sum $A\oplus V$ of vector spaces, defined by
\begin{equation}\label{eq:sum}
[x_1+u_1,x_2+u_2,x_3+u_3]_{A\oplus V}=\{x_1,x_2,x_3\}+l(x_1,x_2)u_3-r(x_1,x_3)u_2+r(x_2,x_3)u_1,
\end{equation}
for $x_i\in A, u_i\in V, 1\leq i\leq 3$. We denote this semi-direct product $3$-Lie algebra by $A\ltimes_{l,r} V.$
\end{pro}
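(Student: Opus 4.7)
The plan is to prove both directions simultaneously by expanding the three defining identities of a $3$-pre-Lie algebra on the semi-direct product and matching the resulting equations with the axioms of Definition \ref{defi:rep} applied to $l$ together with \eqref{rep1}--\eqref{rep4}. Since the proposed bracket produces no $V$-output when at least two of its entries lie in $V$, all axioms are automatically trivial on such triples, and it suffices to test each axiom on pure $A$-triples (where it reduces to the axiom for $A$) and on triples with exactly one $V$-entry. The skew-symmetry \eqref{3-pre-Lie 0} then holds for free: swapping the first two slots negates $\{x_1,x_2,x_3\}$ by the axiom in $A$, negates $l(x_1,x_2)u_3$ because $l$ is defined on $\wedge^2 A$, and swaps the two $r$-terms with a compensating sign change.

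To attack \eqref{3-pre-Lie 1} and \eqref{3-pre-Lie 2} I would first compute
$$[x_1+u_1,x_2+u_2,x_3+u_3]^C_{A\oplus V}=[x_1,x_2,x_3]^C+\mu(x_1,x_2)u_3+\mu(x_3,x_1)u_2+\mu(x_2,x_3)u_1$$
with $\mu(x,y)=l(x,y)+r(x,y)-r(y,x)$, which is exactly where the auxiliary map $\mu$ of Definition \ref{defi:rep} enters. Substituting $x_i+u_i$ ($1\leq i\leq 5$) in each identity, expanding, and grouping $V$-terms by the $u_i$ they contain, the $A$-component vanishes by the axioms for $A$; the $u_5$-coefficients of \eqref{3-pre-Lie 1} and \eqref{3-pre-Lie 2} reproduce exactly conditions (i) and (ii) of Definition \ref{defi:rep} for $l$ as a representation of $A^c$; and the coefficients of $u_1,\ldots,u_4$ reproduce the four mixed identities \eqref{rep1}--\eqref{rep4}, possibly after reindexing and using the skew-symmetry of the first two slots. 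The main obstacle is combinatorial rather than conceptual: each of \eqref{3-pre-Lie 1} and \eqref{3-pre-Lie 2} yields five separate equations (one per $u_i$), and each occurrence of $[\cdot,\cdot,\cdot]^C_{A\oplus V}$ triples the term count via the $\mu$-formula, so careful sign bookkeeping, especially of the $-r(x_1,x_3)u_2$ contribution and the cyclic permutations from $[\cdot,\cdot,\cdot]^C$, is required. Once the correspondence is established, every step is reversible, yielding both directions of the equivalence.
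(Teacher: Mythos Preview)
The paper states this proposition without proof, so there is no argument in the text to compare against. Your outlined approach is the standard one for such semi-direct product characterizations and is correct: the multilinearity reduction to tuples with at most one $V$-entry is valid (any term in \eqref{3-pre-Lie 1} or \eqref{3-pre-Lie 2} is a depth-two nested bracket, and with two or more $V$-leaves some bracket in the nest acquires two $V$-arguments and vanishes), the computation of $[\cdot,\cdot,\cdot]^C_{A\oplus V}$ in terms of $\mu$ is right, and matching the $u_i$-coefficients against Definition~\ref{defi:rep}(i)--(ii) and \eqref{rep1}--\eqref{rep4} is exactly what is required. The only point that deserves a word of care is your count: the two identities with five $V$-slots each give ten linear conditions, while the representation axioms comprise six; the surplus is absorbed by the skew-symmetry of \eqref{3-pre-Lie 1} in $(x_1,x_2)$ and in $(x_3,x_4)$, and by the cyclic symmetry of \eqref{3-pre-Lie 2} in $(x_1,x_2,x_3)$, so several of the ten conditions coincide after reindexing, as you note. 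Nothing is missing conceptually; carrying out the bookkeeping completes the proof in both directions.
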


Let $V$ be a vector space.   Define the switching operator
$\tau:\otimes^2 V\longrightarrow \otimes^2 V$ by
\begin{eqnarray*}
\tau(T)=x_2\otimes x_1,\quad \forall T=x_1\otimes x_2\in\otimes^2 V.
\end{eqnarray*}

  \begin{pro}
Let $(V,l,r)$ be a representation of a $3$-pre-Lie algebra $(A,\{\cdot,\cdot,\cdot\})$. Then $l-r\tau+r$ is a representation of the
sub-adjacent $3$-Lie algebra $(A^c,[\cdot,\cdot,\cdot]_C)$ on the vector space $V$.
  \end{pro}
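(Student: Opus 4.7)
The plan is to deduce the result from the semi-direct product construction of the previous proposition, rather than to check the $3$-Lie representation axioms of Definition \ref{defi:rep} directly from \eqref{rep1}--\eqref{rep4}. Write $\mu(x,y):=l(x,y)+r(x,y)-r(y,x)$, which is exactly $(l-r\tau+r)(x,y)$ and coincides with the auxiliary map already appearing in the definition of a representation.

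Since $(V,l,r)$ is a representation of $(A,\{\cdot,\cdot,\cdot\})$, the previous proposition gives a $3$-pre-Lie algebra structure on $A\oplus V$. Taking its sub-adjacent $3$-commutator $[\cdot,\cdot,\cdot]_C$ therefore produces a $3$-Lie algebra structure on $A\oplus V$. I would then expand this bracket directly: summing the three cyclic permutations of $[x_1+u_1,x_2+u_2,x_3+u_3]_{A\oplus V}$, the $A$-component collapses to $[x_1,x_2,x_3]_C$, and a short bookkeeping shows that the coefficient of $u_3$ in the $V$-component is $l(x_1,x_2)+r(x_1,x_2)-r(x_2,x_1)=\mu(x_1,x_2)$, while the coefficients of $u_2$ and $u_1$ come out to be $\mu(x_3,x_1)$ and $\mu(x_2,x_3)$ respectively. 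Thus
\begin{equation*}
[x_1+u_1,x_2+u_2,x_3+u_3]_C=[x_1,x_2,x_3]_C+\mu(x_1,x_2)u_3+\mu(x_3,x_1)u_2+\mu(x_2,x_3)u_1,
\end{equation*}
which is precisely the semi-direct product form of a $3$-Lie algebra associated to a representation.

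Invoking the ``if and only if'' characterization stated right after Definition \ref{defi:rep}, the existence of this semi-direct product $3$-Lie algebra structure forces $\mu=l-r\tau+r$ to be a representation of $A^c$ on $V$; in particular the skew-symmetry of $\mu$ follows automatically (and may also be checked directly from the skew-symmetry of $l$, since $l(x,y)+l(y,x)=0$). The only genuine calculation is the cyclic-sum bookkeeping in the $V$-component; the main pitfall is simply tracking the signs and the permutation pattern of the $u_i$ so that they line up with the convention of the $3$-Lie semi-direct product formula.
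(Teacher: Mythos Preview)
The paper states this proposition without proof, so there is no argument to compare against directly. Your approach is correct and is in fact the natural one suggested by the surrounding material: you pass from the semi-direct product $3$-pre-Lie algebra $A\ltimes_{l,r}V$ (the preceding proposition) to its sub-adjacent $3$-Lie algebra, identify the result as the formula \eqref{eq:sum} for the semi-direct product $3$-Lie algebra with $\rho=\mu$, and then invoke the ``if and only if'' remark following Definition~\ref{defi:rep}. The cyclic-sum bookkeeping you describe is exactly right (the coefficient of $u_3$ is $l(x_1,x_2)+r(x_1,x_2)-r(x_2,x_1)=\mu(x_1,x_2)$, and cyclically for $u_1,u_2$).

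The only alternative route would be a direct verification of the two identities in Definition~\ref{defi:rep} for $\mu$ from the four axioms \eqref{rep1}--\eqref{rep4}; that works but is a longer and less conceptual linear-combination chase. Your semi-direct-product argument has the advantage that it explains \emph{why} the particular combination $l-r\tau+r$ appears: it is forced by the $3$-commutator of the $3$-pre-Lie semi-direct product. One small point worth making explicit in a final write-up is that the ``if and only if'' characterization after Definition~\ref{defi:rep} also encodes the skew-symmetry of $\rho$, so you are entitled to read off $\mu(x,y)=-\mu(y,x)$ from it rather than checking it separately.
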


\begin{pro}
Let $(l,r)$ be a representation  of a $3$-pre-Lie algebra $(A,\{\cdot,\cdot,\cdot\})$ on a vector space $V$.  Then $(l^*-r^* \tau +r^*,-r^*)$ is a representation of the  $3$-pre-Lie algebra $(A,\{\cdot,\cdot,\cdot\})$  on the vector space $V^*$, which is called the dual representation of the representation $(V,l,r)$.
\end{pro}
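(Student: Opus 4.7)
The plan is to verify that the pair $(L, R) := (l^* - r^*\tau + r^*,\, -r^*)$ satisfies the axioms of a representation of the $3$-pre-Lie algebra $(A, \{\cdot,\cdot,\cdot\})$ on $V^*$. This amounts to two checks: that $L$ is a representation of the sub-adjacent $3$-Lie algebra $A^c$ on $V^*$, and that the four compatibility identities \eqref{rep1}--\eqref{rep4} hold for $(L, R)$ with the auxiliary map $M(x,y) := L(x,y) + R(x,y) - R(y,x)$.

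The first observation I would make is the purely algebraic simplification
\[
M = L + R - R\tau = (l^* - r^*\tau + r^*) + (-r^*) - (-r^*\tau) = l^*,
\]
together with the analogous identification $L = \mu^*$, where $\mu(x,y) := l(x,y) + r(x,y) - r(y,x)$. These two identities are the crucial bookkeeping facts: the $r^*$-terms cancel in pairs, and the dual construction is controlled entirely by the interplay of $l^*$ and $r^*$.

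For the $A^c$-representation condition on $L$, I would invoke the preceding proposition, which states that $\mu = l - r\tau + r$ is a representation of the sub-adjacent $3$-Lie algebra $A^c$ on $V$. Using the standard sign convention $\rho^*(x,y) := -\rho(x,y)^T$ (under which the dual of any $3$-Lie algebra representation is again a representation), it follows immediately that $L = \mu^*$ is a representation of $A^c$ on $V^*$.

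For the four compatibility axioms, the strategy is uniform: starting from the identity for $(l, r)$, I apply the linear adjoint using $(AB)^* = B^*A^*$ and linearity, and then identify terms using $M = l^*$, $L = \mu^*$, and $R = -r^*$. For instance, dualizing \eqref{rep1} yields
\[
r^*(x_3,x_4)\, l^*(x_1,x_2) = \mu^*(x_1,x_2)\, r^*(x_3,x_4) - r^*([x_1,x_2,x_3]_C, x_4) - r^*(x_3,\{x_1,x_2,x_4\}),
\]
which, after rewriting via $L = \mu^*$, $M = l^*$, $R = -r^*$, collapses precisely to \eqref{rep1} for $(L, R)$. The axioms \eqref{rep2}, \eqref{rep3}, \eqref{rep4} are treated analogously. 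The main obstacle is purely bookkeeping: the order-reversal from $(AB)^* = B^*A^*$, the minus sign from $\rho^* = -\rho^T$, and the $\tau$-swap must all be tracked simultaneously, and the simplification $M = l^*$ is essential for each of the four dualized identities to fit the required (rep$i$) format without leftover terms.
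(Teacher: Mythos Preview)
The paper does not supply a proof of this proposition; it is stated without argument (the result is quoted from \cite{Sheng Yunhe and Rong Tang}). So there is nothing to compare your attempt against, and the relevant question is simply whether your plan is sound.

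Your two organizing identities are correct and are exactly the right way to make the verification clean: with $L:=l^*-r^*\tau+r^*$ and $R:=-r^*$ one has
\[
M:=L+R-R\tau=l^*,\qquad L=\mu^*,
\]
and since $\mu=l-r\tau+r$ is a representation of $A^c$ by the preceding proposition, its dual $\mu^*=L$ is as well. Your sample computation dualizing \eqref{rep1} is also correct, and the same argument indeed yields \eqref{rep4} for $(L,R)$ from \eqref{rep4} for $(l,r)$.

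One caution about the phrase ``treated analogously'': the four identities do not each dualize to themselves one-for-one. For example, dualizing \eqref{rep2} produces
\[
R([x_1,x_2,x_3]_C,x_4)=-R(x_3,x_4)M(x_1,x_2)-R(x_1,x_4)M(x_2,x_3)-R(x_2,x_4)M(x_3,x_1),
\]
which involves $R\cdot M$, not the $L\cdot R$ required by \eqref{rep2} for $(L,R)$. The dualized system is equivalent to the system \eqref{rep1}--\eqref{rep4} for $(L,R)$, but establishing \eqref{rep2} and \eqref{rep3} for $(L,R)$ requires combining the dualized identities (e.g.\ feeding the already-established \eqref{rep1} or \eqref{rep4} for $(L,R)$ back in) rather than reading them off directly. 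This is only bookkeeping, consistent with what you describe as the ``main obstacle,'' but it is worth noting so you are not surprised when the one-line substitution does not close up for those two axioms.
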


 If $(l,r)=(L,R)$ is the adjoint representation of a $3$-pre-Lie algebra $(A,\{\cdot,\cdot,\cdot\})$, then we obtain
$(l^*-r^*\tau+r^*,-r^*)=(ad^*,-R^*)$.

\begin{defi}
 Let $(A,\{\c,\c,\c\})$ be a $3$-pre-Lie algebra and $(V,l,r)$ be a representation. A linear operator $T: V \to A$ is called and $\mathcal{O}$-operator associated to $(V,l,r)$ if $T$ satisfies
 \begin{align}\label{O-op 3-pre-Lie}
    \{Tu,Tv,Tw\}=T\left(l(Tu,Tv)w-r(Tu,Tw)v+r(Tv,Tw)u\right),\quad \forall u,v,w\in V.
 \end{align}
\end{defi}
If $V=A$, then $T$ is called a Rota-Baxter operator on $A$ of weight zero. That is
\begin{align*}
[R(x),R(y),R(z)]= R\big([R(x),R(y),z]+[R(x),y,R(z)]+[x,R(y),R(z)]\big),
\end{align*}
for all $x,y,z \in A$.

\begin{ex}
Let the $4$-dimensional $3$-pre-Lie algebra given in Example \ref{Example1}. Define $R:A\to A$ by
$$R(e_1) = e_1 + e_2,\ R(e_2) = e_3 + e_4,\ R(e_3) = R(e_4) = 0.$$
By a direct computation, we can verify that $R$ is a Rota-Baxter operator.
\end{ex}


\section{$3$-L-dendriform algebras}
In this section, we introduce the notion of a $3$-L-dendriform algebra which is exactly the ternary version of a L-dendriform algebra. We provide some construction results in terms of $\mathcal{O}$-operator and symplectic  structure.

\begin{defi}
Let $A$ be a vector space with two linear maps $\nwarrow, \nearrow : \otimes^3 A \to A$. The tuple $(A,\nwarrow,\nearrow)$ is called a $3$-L-dendriform algebra if the following identities hold
\begin{align}\label{3-L-dendriform0}
&  \nw(x_1,x_2,x_3)+ \nw(x_2,x_1,x_3)=0  ,\\
 \label{3-L-dendriform1}&   \nw(x_1,x_2,\nw(x_3,x_4,x_5))-  \nw(x_3,x_4,\nw(x_1,x_2,x_5)) \nonumber\\
&\hspace{2 cm} =\nw([x_1,x_2,x_3]^C,x_4,x_5)-\nw([x_1,x_2,x_4]^C,x_3,x_5)      ,\\
 &\label{3-L-dendriform2}  \nw(x_1,x_2,\ne(x_5,x_3,x_4))-\ne(x_5,x_3,\{x_1,x_2,x_4\}^h) \nonumber\\
& \hspace{2 cm} =\ne(x_5,[x_1,x_2,x_3]^C,x_4)+\ne(\{x_1,x_2,x_5\}^v,x_3,x_4)      , \\
 & \label{3-L-dendriform3}\ne(x_5,x_1,\{x_2,x_3,x_4\}^h)-\nw(x_2,x_3,\ne(x_5,x_1,x_4))
 \nonumber\\
&\hspace{2 cm}=\ne(\{x_1,x_2,x_5\}^v,x_3,x_4)-\ne(\{x_1,x_3,x_5\}^v,x_2,x_4)
      ,\\
 &  \label{3-L-dendriform4}\nw([x_1,x_2,x_3]^C,x_4,x_5)=\circlearrowleft_{1,2,3}\nw(x_1,x_2,\nw(x_3,x_4,x_5)),
   \\
 &   \label{3-L-dendriform5}     \ne(x_5,[x_1,x_2,x_3]^C,x_4)=\circlearrowleft_{1,2,3}\nw(x_1,x_2,\ne(x_5,x_3,x_4)) ,\\
 &   \label{3-L-dendriform6}   \nw(x_1,x_2,\ne(x_5,x_3,x_4))+\ne(x_5,x_1,\{x_2,x_3,x_4\}^h)\nonumber\\
&\hspace{2 cm} =\ne(x_5,x_2,\{x_1,x_3,x_4\}^h)+\ne(\{x_1,x_2,x_5\}^v,x_3,x_4)     ,
\end{align}
 for all $x_i \in A$, $1\leq i \leq 5$, where
\begin{align}
& \{x,y,z\}^h=\nw(x,y,z)+\ne(x,y,z)-\ne(y,x,z)  ,   \label{accolade horizintal}\\
& \{x,y,z\}^v=\nw(x,y,z)+\ne(z,x,y)-\ne(z,y,x)  ,   \label{accolade vertical}\\
& [x,y,z]^C= \circlearrowleft_{x,y,z} \{x,y,z\}^h=\circlearrowleft_{x,y,z} \{x,y,z\}^v, \label{crochet}
\end{align}
for any $x,y,z \in A$.
\end{defi}

\begin{rmk}
Let $(A,\nw,\ne)$ be a $3$-L-dendriform algebra.  if $\ne=0$, then $(A,\nw)$ is a $3$-pre-Lie algebra.
\end{rmk}

\begin{pro}\label{3LDendTo3PreLie}
Let $(A,\nw,\ne)$ be a $3$-L-dendriform algebra.
\begin{enumerate}
  \item  The bracket given in \eqref{accolade horizintal} defines a $3$-pre-Lie algebra structure on $A$ which is called the associated  horizontal  $3$-pre-Lie algebra of  $(A,\nw,\ne)$  and $(A,\nw,\ne)$ is also called  a compatible $3$-L-dendriform algebra structure  on the  $3$-pre-Lie algebra $(A,\{\c,\c,\c\}^h)$.
  \item  The bracket given in \eqref{accolade vertical}  defines a $3$-pre-Lie algebra structure on $A$ which is called the associated vertical  $3$-pre-Lie algebra of  $(A,\nw,\ne)$  and $(A,\nw,\ne)$ is also called  a compatible $3$-L-dendriform algebra structure  on the  $3$-pre-Lie algebra $(A,\{\c,\c,\c\}^v)$.
\end{enumerate}

\end{pro}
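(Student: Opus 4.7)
The plan is to verify, for each of the brackets $\{\cdot,\cdot,\cdot\}^h$ and $\{\cdot,\cdot,\cdot\}^v$, the three defining axioms of a $3$-pre-Lie algebra, namely the skew-symmetry \eqref{3-pre-Lie 0} and the two identities \eqref{3-pre-Lie 1} and \eqref{3-pre-Lie 2}. The strategy is direct: expand each occurrence of $\{\cdot,\cdot,\cdot\}^h$ or $\{\cdot,\cdot,\cdot\}^v$ using \eqref{accolade horizintal} and \eqref{accolade vertical}, sort the resulting terms according to the type of composition ($\nw\circ\nw$, $\nw\circ\ne$, $\ne\circ\nw$, $\ne\circ\ne$), and match each group by invoking the appropriate axiom among \eqref{3-L-dendriform0}--\eqref{3-L-dendriform6}. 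The six axioms of a $3$-L-dendriform algebra are set up so that each one is responsible for exactly one such ``slot type'' in the expansion; this is what makes the verification clean despite its length.

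First, I would dispose of skew-symmetry. For $\{x,y,z\}^h=\nw(x,y,z)+\ne(x,y,z)-\ne(y,x,z)$, skew-symmetry in $(x,y)$ follows because $\nw$ is skew in its first two slots by \eqref{3-L-dendriform0} and the combination $\ne(x,y,z)-\ne(y,x,z)$ is manifestly antisymmetric in $(x,y)$; the same argument works for $\{x,y,z\}^v$. I would also record the quick check that the cyclic sums of $\{\cdot,\cdot,\cdot\}^h$ and $\{\cdot,\cdot,\cdot\}^v$ coincide, so that $[\cdot,\cdot,\cdot]^C$ in \eqref{crochet} is unambiguously defined; a direct cyclic expansion of $\{x,y,z\}^h-\{x,y,z\}^v=\ne(x,y,z)-\ne(y,x,z)-\ne(z,x,y)+\ne(z,y,x)$ collapses telescopically.

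Next, for the horizontal bracket I would verify identity \eqref{3-pre-Lie 1}. Expanding $\{x_1,x_2,\{x_3,x_4,x_5\}^h\}^h$ and the three terms on the right-hand side of \eqref{3-pre-Lie 1} using \eqref{accolade horizintal} produces a large but organized sum; grouping terms by composition type, the $\nw\circ\nw$ terms cancel by \eqref{3-L-dendriform1}, the mixed $\nw\circ\ne$ and $\ne\circ\nw$ terms cancel by \eqref{3-L-dendriform2}, \eqref{3-L-dendriform3} and \eqref{3-L-dendriform6}, and the $\ne\circ\ne$ terms cancel essentially by rearrangement together with \eqref{3-L-dendriform6}. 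Identity \eqref{3-pre-Lie 2} is handled analogously: expanding $\{[x_1,x_2,x_3]^C,x_4,x_5\}^h$ via \eqref{accolade horizintal} and \eqref{crochet} and comparing with the right-hand side, the $\nw$-only part is precisely \eqref{3-L-dendriform4}, the mixed parts involving one $\ne$ on the outside are precisely \eqref{3-L-dendriform5}, and the remaining terms close up via \eqref{3-L-dendriform6} together with the skew-symmetry already established.

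For the vertical bracket I would repeat exactly the same procedure, noting that \eqref{accolade vertical} differs from \eqref{accolade horizintal} only in that the extra $\ne$-terms act through the third slot. The same six 3-L-dendriform axioms govern the cancellations; in particular, the roles of \eqref{3-L-dendriform2} and \eqref{3-L-dendriform3} are interchanged relative to the horizontal case, and axiom \eqref{3-L-dendriform6} again ties together the purely $\ne$-type terms. The main obstacle in the whole argument is not conceptual but bookkeeping: one must keep scrupulous track of which variable sits in which slot under the cyclic permutations $\circlearrowleft_{1,2,3}$ and $\circlearrowleft_{x,y,z}$, since a single misplaced index scuttles the matching. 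I would therefore organize the verification in a table whose rows are indexed by composition type and whose columns correspond to the terms of \eqref{3-pre-Lie 1} or \eqref{3-pre-Lie 2}, filling in entries from the expansion and cross-referencing each cancellation with the appropriate axiom.
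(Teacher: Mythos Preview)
Your overall plan---direct expansion and term-matching against the axioms \eqref{3-L-dendriform0}--\eqref{3-L-dendriform6}---is exactly what the paper does. The one substantive difference is in bookkeeping: you propose to sort the expanded terms by \emph{composition type} ($\nw\circ\nw$, $\nw\circ\ne$, etc.), whereas the paper instead decomposes the difference in \eqref{3-pre-Lie 1} as $r_1+\cdots+r_5$ and the difference in \eqref{3-pre-Lie 2} as $s_1+\cdots+s_5$, where each $r_i$ or $s_i$ is literally an instance of one dendriform axiom (with $\{\cdot,\cdot,\cdot\}^h$, $\{\cdot,\cdot,\cdot\}^v$ and $[\cdot,\cdot,\cdot]^C$ left unexpanded inside). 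This is cleaner than your scheme because the axioms \eqref{3-L-dendriform2}, \eqref{3-L-dendriform3}, \eqref{3-L-dendriform6} themselves mix composition types through the packaged brackets, so sorting by type forces you to split and recombine axioms. In particular, your attribution is slightly off: the paper's grouping shows that \eqref{3-pre-Lie 1} is handled entirely by \eqref{3-L-dendriform1}--\eqref{3-L-dendriform3} (no need for \eqref{3-L-dendriform6}), and \eqref{3-pre-Lie 2} entirely by \eqref{3-L-dendriform4}--\eqref{3-L-dendriform6}; the six axioms split three-and-three between the two $3$-pre-Lie identities. Adopting the paper's grouping would spare you the table and the worry about misplaced indices. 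As in the paper, the vertical case (item 2) is left to an analogous check.
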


\begin{proof}
We will just prove item 1.
Note, first that $\{x,y,z\}^h=-\{y,x,z\}^h$ and $\{x,y,z\}^v=-\{y,x,z\}^v$ , for any $x,y,z \in A$.  \\
Let $x_i \in A,\ 1 \leq i \leq 5$. Then
\begin{align*}
&\{x_1,x_2,\{x_3,x_4,x_5\}^h\}^h-\{x_3,x_4,\{x_1,x_2,x_5\}^h\}^h -\{[x_1,x_2,x_3]^C,x_4,x_5\}^h+\{[x_1,x_2,x_4]^C,x_3,x_5\}^h\\
&= r_1+ r_2+r_3+r_4+r_5,
\end{align*}
where
\begin{align*}
r_1&= \nw(x_1,x_2,\nw(x_3,x_4,x_5))-  \nw(x_3,x_4,\nw(x_1,x_2,x_5))  -\nw([x_1,x_2,x_3]^C,x_4,x_5)\\
&+\nw([x_1,x_2,x_4]^C,x_3,x_5), \\
r_2&= \ne(x_3,[x_1,x_2,x_4]^C,x_5)+\ne(x_3,x_4,\{x_1,x_2,x_5\}^h) - \nw(x_1,x_2,\ne(x_3,x_4,x_5))  \\
&+\ne(\{x_1,x_2,x_3\}^v,x_4,x_5) ,\\
r_3&= \ne(x_4,[x_1,x_2,x_3]^C,x_5)+\ne(x_4,x_3,\{x_1,x_2,x_5\}^h)- \nw(x_1,x_2,\ne(x_4,x_3,x_5))\\
&  +\ne(\{x_1,x_2,x_4\}^v,x_3,x_5),\\
r_4&= \ne(x_1,x_2,\{x_3,x_4,x_5\}^h)-\nw(x_3,x_4,\ne(x_1,x_2,x_5))
 \nonumber-\ne(\{x_2,x_3,x_1\}^v,x_4,x_5)
 \\
&+\ne(\{x_2,x_4,x_1\}^v,x_3,x_5) ,\\
 r_5&= \ne(x_2,x_1,\{x_3,x_4,x_5\}^h)-\nw(x_3,x_4,\ne(x_2,x_1,x_5))
 \nonumber-\ne(\{x_1,x_3,x_2\}^v,x_4,x_5)
\\
& +\ne(\{x_1,x_4,x_2\}^v,x_3,x_5).
\end{align*}
From identities \eqref{3-L-dendriform1}-\eqref{3-L-dendriform3}, we obtain immediately $r_i=0,\ \forall 1 \leq i \leq 5$.
This imply that \eqref{3-pre-Lie 1} hold.

On the other hand, we have
\begin{align*}
&\{ [x_1,x_2,x_3]^C,x_4, x_5\}^h-\{x_1,x_2,\{ x_3,x_4, x_5\}^h\}^h-\{x_2,x_3,\{ x_1,x_4,x_5\}^h\}^h -\{x_3,x_1,\{ x_2,x_4, x_5\}^h\}^h \\
=& s_1+s_2+s_3+s_4+s_5,
\end{align*}
where
\begin{align*}
&s_1=\nw([x_1,x_2,x_3]^C,x_4,x_5)-
\circlearrowleft_{1,2,3}\nw(x_1,x_2,\nw(x_3,x_4,x_5)),\\
&s_2= \ne(x_4,[x_1,x_2,x_3]^C,x_5)-
\circlearrowleft_{1,2,3}\nw(x_1,x_2,\ne(x_4,x_3,x_5)) ,\\
&s_3=\nw(x_1,x_2,\ne(x_3,x_4,x_5))+\ne(x_3,x_1,\{x_2,x_3,x_5\}^h)\\
&\hspace{2 cm} -\ne(x_3,x_2,\{x_1,x_4,x_5\}^h)-\ne(\{x_1,x_2,x_3\}^v,x_4,x_5)      ,\\
&s_4= \nw(x_2,x_3,\ne(x_1,x_4,x_5))+\ne(x_1,x_2,\{x_3,x_4,x_5\}^h)\\
&\hspace{2 cm} -\ne(x_1,x_3,\{x_2,x_4,x_5\}^h)-\ne(\{x_2,x_3,x_1\}^v,x_4,x_5)   ,\\
&s_5=  \nw(x_3,x_1,\ne(x_2,x_4,x_5))+\ne(x_2,x_3,\{x_1,x_4,x_5\}^h)\\
&\hspace{2 cm} -\ne(x_2,x_1,\{x_3,x_4,x_5\}^h)-\ne(\{x_3,x_1,x_2\}^v,x_4,x_5)  .\\
\end{align*}
From identities \eqref{3-L-dendriform4}-\eqref{3-L-dendriform6}, we obtain immediately $s_i=0,\ \forall 1 \leq i \leq 5$.
This imply that \eqref{3-pre-Lie 2} hold.

\end{proof}

\begin{cor}
Let $(A,\nw,\ne)$ be a $3$-L-dendriform algebra. Then the bracket defined in  \eqref{crochet} defines a $3$-Lie algebra structure on $A$ which is called the associated   $3$-Lie algebra of  $(A,\nw,\ne)$.
\end{cor}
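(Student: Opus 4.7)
The plan is to reduce the statement to two results already established earlier in the paper, namely Proposition \ref{3LDendTo3PreLie} together with the fact that every $3$-pre-Lie algebra has a sub-adjacent $3$-Lie algebra via the $3$-commutator construction of Eq.~\eqref{eq:3cc}.

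First I would invoke Proposition \ref{3LDendTo3PreLie}(1): since $(A,\nw,\ne)$ is a $3$-L-dendriform algebra, the horizontal bracket $\{\cdot,\cdot,\cdot\}^h$ defined by \eqref{accolade horizintal} endows $A$ with the structure of a $3$-pre-Lie algebra. Then, applying the proposition stated just after Example~\ref{Example2} (that the $3$-commutator of any $3$-pre-Lie algebra is a $3$-Lie algebra), I obtain that the cyclic sum
\[
[x,y,z]^C := \{x,y,z\}^h+\{y,z,x\}^h+\{z,x,y\}^h
\]
satisfies skew-symmetry in all three arguments and the Filippov Fundamental Identity \eqref{eq:de1}, hence defines a $3$-Lie algebra on $A$. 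This is exactly the bracket \eqref{crochet}, so the $3$-Lie structure is obtained.

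To make the statement of \eqref{crochet} fully consistent, I would also verify (or note) that the cyclic sum of the vertical bracket $\{\cdot,\cdot,\cdot\}^v$ produces the same ternary operation. This is a short bookkeeping check: expanding both cyclic sums using \eqref{accolade horizintal} and \eqref{accolade vertical}, the $\nw$-terms agree obviously (since $\nw$ appears identically in both), and the nine $\ne$-terms rearrange to the common expression
\[
\ne(x,y,z)+\ne(y,z,x)+\ne(z,x,y)-\ne(y,x,z)-\ne(z,y,x)-\ne(x,z,y).
\]
This equality is purely formal and uses only the defining formulas, not any of the axioms \eqref{3-L-dendriform0}--\eqref{3-L-dendriform6}; alternatively, one could obtain the same conclusion by applying Proposition \ref{3LDendTo3PreLie}(2) in place of (1), which gives the same $3$-Lie bracket by uniqueness of the sub-adjacent structure.

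There is essentially no obstacle: the hard work has already been carried out in the proof of Proposition \ref{3LDendTo3PreLie}, where the $3$-pre-Lie axioms \eqref{3-pre-Lie 1}--\eqref{3-pre-Lie 2} for $\{\cdot,\cdot,\cdot\}^h$ were extracted from the seven identities \eqref{3-L-dendriform1}--\eqref{3-L-dendriform6}. The only thing to check at this stage is the consistency of the two definitions of $[\cdot,\cdot,\cdot]^C$, and this is a purely symbolic manipulation of the $\ne$-terms.
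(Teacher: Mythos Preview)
Your proposal is correct and follows exactly the approach the paper intends: the Corollary is stated without proof because it is an immediate consequence of Proposition~\ref{3LDendTo3PreLie} together with the earlier result that the $3$-commutator of any $3$-pre-Lie algebra is a $3$-Lie bracket. Your additional verification that the cyclic sums of $\{\cdot,\cdot,\cdot\}^h$ and $\{\cdot,\cdot,\cdot\}^v$ coincide is a welcome clarification of the equality asserted in \eqref{crochet}, and your argument for it is correct.
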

The following Proposition is obvious.
\begin{pro}
Let $(A,\nw,\ne)$ be a $3$-L-dendriform algebra. Define $L_{\nw},R_{\ne}: \otimes^2 A \to gl(A)$ by
$$L_{\nw}(x,y)z=\nw(x,y,z),\quad  R_{\ne}(x,y)z=\ne(z,x,y), \ \rho(x,y)z=\nw(x,y,z)+\ne(z,x,y)-\ne(z,y,x)$$
 for all $x,y,z \in A.$
Then
\begin{enumerate}
\item[(1)] $(A,L_{\nw},R_{\ne})$ is a representation of  its  horizontal associated   $3$-pre-Lie algebra $(A,\{\c,\c,\c\}^h)$.
\item[(2)]  $(A,L_{\nw})$ is a representation of its associated $3$-Lie algebra $(A,[\c,\c,\c]^C)$.
\item[(3)] $(A,\rho)$ is a representation of its associated $3$-Lie algebra $(A,[\c,\c,\c]^C)$.
\end{enumerate}
\end{pro}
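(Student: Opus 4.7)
The plan is to verify the three items by directly translating the 3-L-dendriform axioms into operator identities. The key preliminary observation is that, writing $\rho(x,y)z=\nw(x,y,z)+\ne(z,x,y)-\ne(z,y,x)$, we have
\[
\rho(x,y)=L_{\nw}(x,y)+R_{\ne}(x,y)-R_{\ne}(y,x)=\mu(x,y),
\]
where $\mu$ is the combination appearing in the definition of a representation of a 3-pre-Lie algebra; equivalently $\rho(x,y)z=\{x,y,z\}^{v}$. This identification is what makes each term of the form $\ne(\{x_{1},x_{2},x_{5}\}^{v},x_{3},x_{4})$ rewrite cleanly as $R_{\ne}(x_{3},x_{4})\mu(x_{1},x_{2})x_{5}$.

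For item (2), I first check that $L_{\nw}$ is a representation of the 3-Lie algebra $(A,[\cdot,\cdot,\cdot]^{C})$. Axioms (i) and (ii) of Definition \ref{defi:rep} applied to $L_{\nw}$ and the bracket $[\cdot,\cdot,\cdot]^{C}$ are, once both sides are applied to an auxiliary element $x_{5}\in A$, precisely identities \eqref{3-L-dendriform1} and \eqref{3-L-dendriform4}. This settles (2) and simultaneously establishes the ``$l$ is a 3-Lie representation'' part of the definition of a 3-pre-Lie representation needed for (1).

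For item (1), I then verify the four compatibility axioms (\ref{rep1})--(\ref{rep4}) for the pair $(l,r)=(L_{\nw},R_{\ne})$ with $\mu$ as above. Applying each axiom to $x_{5}$ and using $R_{\ne}(x,y)x_{5}=\ne(x_{5},x,y)$ together with the identification $\mu=\rho$ above, I expect the correspondence to be:
\[
\text{(\ref{rep1})}\longleftrightarrow\eqref{3-L-dendriform2},\quad \text{(\ref{rep2})}\longleftrightarrow\eqref{3-L-dendriform5},\quad \text{(\ref{rep3})}\longleftrightarrow\eqref{3-L-dendriform3},\quad \text{(\ref{rep4})}\longleftrightarrow\eqref{3-L-dendriform6},
\]
each becoming a literal rewriting after moving terms across the equals sign. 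There is no hidden cancellation to engineer; it is pure bookkeeping.

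For item (3), I would avoid repeating a direct computation and instead invoke the earlier Proposition stating that if $(V,l,r)$ is a representation of a 3-pre-Lie algebra, then $l-r\tau+r$ is a representation of its sub-adjacent 3-Lie algebra. Applied to the representation $(A,L_{\nw},R_{\ne})$ of $(A,\{\cdot,\cdot,\cdot\}^{h})$ just established in (1), whose sub-adjacent 3-Lie bracket is $[\cdot,\cdot,\cdot]^{C}$ by Proposition \ref{3LDendTo3PreLie}, one gets $(l-r\tau+r)(x,y)=\rho(x,y)$, which is exactly (3). The main (and essentially only) care needed in the whole argument is keeping the $h$- and $v$-brackets straight during the translation in item (1); I do not foresee a conceptual obstacle, which matches the author's remark that the proposition is obvious.
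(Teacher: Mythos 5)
Your proposal is correct: the identification $\mu(x,y)z=\{x,y,z\}^{v}=\rho(x,y)z$ is right, the four correspondences \eqref{rep1}$\leftrightarrow$\eqref{3-L-dendriform2}, \eqref{rep2}$\leftrightarrow$\eqref{3-L-dendriform5}, \eqref{rep3}$\leftrightarrow$\eqref{3-L-dendriform3}, \eqref{rep4}$\leftrightarrow$\eqref{3-L-dendriform6} all check out (as do \eqref{3-L-dendriform1} and \eqref{3-L-dendriform4} for item (2)), and deducing item (3) from item (1) via the proposition on $l-r\tau+r$ is legitimate since the sub-adjacent bracket of $(A,\{\cdot,\cdot,\cdot\}^{h})$ is $[\cdot,\cdot,\cdot]^{C}$. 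The paper gives no proof at all (it declares the proposition obvious), and your direct translation is precisely the routine verification it intends, so the approaches coincide.
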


\begin{rmk}
In the sense of the above conclusion (1), a $3$-L-dendriform algebra is understood as
a ternary  algebra structure whose left and right multiplications give a bimodule structure on the underlying vector space of the $3$-pre-Lie algebra defined by certain commutators. It can be regarded as the \textbf{rule} of introducing the notion of $3$-L-dendriform algebra, which more generally, is the \textbf{rule}
of introducing the notions of $3$-pre-Lie algebras,  the Loday algebras and their Lie,Jordan and alternative  algebraic analogues.
\end{rmk}

\begin{thm}
 Let $(A,\{\c,\c,\c\})$ be a $3$-pre-Lie algebra and $(V,l,r)$ be a representation. Suppose that  $T: V \to A$ is an $\mathcal{O}$-operator associated to $(V,l,r)$. Then there exists a $3$-L-dendriform algebra structure on $V$ given by
 \begin{align}
  \nw(u,v,w)=l(Tu,Tv)w  , \quad \ne(u, v,w)= r(Tv,Tw)u, \forall u,v,w \in V.
 \end{align}
 Therefore, there exists two associated $3$-pre-Lie algebra structures on $V$ and $T$ is a homomorphism of $3$-pre-Lie algebras. Moreover, $T(V)=\{T(v)| v \in V \}$ is $3$-pre-Lie subalgebra of $(A,\{\c,\c,\c\})$ and there is an induced $3$-L-dendriform algebra structure on $T(V)$ given by
  \begin{align}
  \nw(Tu,Tv,Tw)=T(  \nw(u,v,w))  , \quad \ne(Tu, Tv,Tw)= T(\ne(u, v,w)), \forall u,v,w \in V.
 \end{align}

\end{thm}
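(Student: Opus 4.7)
The overall plan is to verify the seven defining identities \eqref{3-L-dendriform0}--\eqref{3-L-dendriform6} for the proposed operations on $V$, and then deduce the remaining assertions from Proposition~\ref{3LDendTo3PreLie} together with the $\mathcal{O}$-operator identity \eqref{O-op 3-pre-Lie}.

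First I would record two preliminary formulas. Axiom \eqref{3-L-dendriform0} is immediate since $l\colon\wedge^{2}A\to gl(V)$ is already skew-symmetric in its first two slots. Substituting the definitions of $\nw$ and $\ne$ into \eqref{accolade horizintal} and \eqref{accolade vertical} gives
\begin{align*}
\{u,v,w\}^{h} &= l(Tu,Tv)w - r(Tu,Tw)v + r(Tv,Tw)u,\\
\{u,v,w\}^{v} &= \mu(Tu,Tv)w,
\end{align*}
where $\mu(x,y)=l(x,y)+r(x,y)-r(y,x)$. The first expression is exactly the combination inside the $\mathcal{O}$-operator condition, so \eqref{O-op 3-pre-Lie} rewrites as
$$T\{u,v,w\}^{h}=\{Tu,Tv,Tw\},$$
and applying $T$ to \eqref{crochet} yields $T[x_{1},x_{2},x_{3}]^{C}=[Tx_{1},Tx_{2},Tx_{3}]^{C}$ in $A$.

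With these identities in hand, each of \eqref{3-L-dendriform1}--\eqref{3-L-dendriform6} reduces to a known axiom. Unfolding $\nw$ and $\ne$ turns both sides into compositions of $l$ and $r$ at points $Tx_{i}\in A$, and the intertwining formulas above convert every inner bracket $[x_{i},x_{j},x_{k}]^{C}$ or $\{x_{i},x_{j},x_{k}\}^{h/v}$ occurring in \eqref{3-L-dendriform1}--\eqref{3-L-dendriform6} into the corresponding bracket in $A$. Identity \eqref{3-L-dendriform1} then becomes the first representation axiom of $l$ as a representation of $A^{c}$; \eqref{3-L-dendriform2} and \eqref{3-L-dendriform3} become axioms \eqref{rep1} and \eqref{rep3} evaluated at the $Tx_{i}$; and the cyclic identities \eqref{3-L-dendriform4}, \eqref{3-L-dendriform5}, \eqref{3-L-dendriform6} correspond respectively to the second representation axiom of $l$, to \eqref{rep2}, and to \eqref{rep4}.

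The main obstacle I anticipate is the bookkeeping in \eqref{3-L-dendriform4}--\eqref{3-L-dendriform6}: each cyclic symbol $\circlearrowleft_{1,2,3}$ generates three terms mixing $l$, $r$, and $\mu$, and these must be aligned in the correct cyclic order with \eqref{rep2}, \eqref{rep4}, and the $\mathcal{O}$-operator condition, with particular care for the occurrences of $\mu$ inside $\{\cdot,\cdot,\cdot\}^{v}$. Once the seven axioms are established, Proposition~\ref{3LDendTo3PreLie} furnishes the two associated $3$-pre-Lie structures, and the intertwining formula $T\{u,v,w\}^{h}=\{Tu,Tv,Tw\}$ shows that $T$ is a morphism of $3$-pre-Lie algebras from $(V,\{\cdot,\cdot,\cdot\}^{h})$ to $(A,\{\cdot,\cdot,\cdot\})$. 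Consequently $T(V)$ is a $3$-pre-Lie subalgebra; the operations $\nw(Tu,Tv,Tw):=T\nw(u,v,w)$ and $\ne(Tu,Tv,Tw):=T\ne(u,v,w)$ descend to well-defined maps on $T(V)$, and the $3$-L-dendriform identities for them transfer directly from the corresponding identities already verified on $V$ by applying $T$.
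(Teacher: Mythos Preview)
Your proposal is correct and follows essentially the same approach as the paper: compute $\{u,v,w\}^{h}$, $\{u,v,w\}^{v}$ in terms of $l,r,\mu$, use the $\mathcal{O}$-operator identity to obtain $T\{u,v,w\}^{h}=\{Tu,Tv,Tw\}$ and $T[u,v,w]^{C}=[Tu,Tv,Tw]^{C}$, and then reduce each of the seven $3$-L-dendriform axioms to one of the representation axioms for $(l,r)$. In fact you are slightly more systematic than the paper, which only writes out the verifications of \eqref{3-L-dendriform0}, \eqref{3-L-dendriform1}, \eqref{3-L-dendriform2}, \eqref{3-L-dendriform6} and leaves the remaining cases implicit; your explicit matching of \eqref{3-L-dendriform3}, \eqref{3-L-dendriform4}, \eqref{3-L-dendriform5} with \eqref{rep3}, the second axiom for $l$, and \eqref{rep2} is accurate.
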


\begin{proof}
Let $u,v,w \in V$.  Define $\{\c,\c,\c\}_V^h, \{\c,\c,\c\}_V^v,[\c,\c,\c]^C_V:\otimes^3V \to V$ by
\begin{align*}
& \{u,v,w\}_V^h=\nw(u,v,w)+\ne(u,v,w)-\ne(v,u,w),\\
& \{u,v,w\}_V^v=\nw(u,v,w)+\ne(w,u,v)-\ne(w,v,u),\\
&[u,v,w]^C_V=\circlearrowleft_{u,v,w}\{u,v,w\}_V^h.
\end{align*}
Using identity \eqref{O-op 3-pre-Lie}, we have
\begin{align*}
T\{u,v,w\}_V^h &=T(\nw(u,v,w)+\ne(u,v,w)-\ne(v,u,w))\\
&=T(l(Tu,Tv)w-r(Tu,Tw)v+r(Tv,Tw)u)=\{Tu,Tv,Tw\}^h
\end{align*}
and
$$T[u,v,w]^C_V=\circlearrowleft_{u,v,w}T\{u,v,w\}_V
=\circlearrowleft_{u,v,w}\{Tu,Tv,Tw\}^h=[Tu,Tv,Tw]^C.$$
It is straightforward that
$$\nw(u,v,w)+ \nw(v,u,w)= ( l(Tu,Tv)+l(Tv,Tu))w=0.$$
Furthermore, for any $u_i \in V,\ 1 \leq i \leq 5$, we have
\begin{align*}
&   \nw(u_1,u_2,\nw(u_3,u_4,u_5))-  \nw(u_3,u_4,\nw(u_1,u_2,u_5)) \\
&-\nw([u_1,u_2,u_3]_V^C,u_4,u_5)+\nw([u_1,u_2,u_4]_V^C,u_3,u_5) \\
&= l(T(u_1),T(u_2))l(T(u_3),T(u_4))u_5-  l(T(u_3),T(u_4))l(T(u_1),T(u_2))u_5 \\
&-l(T[u_1,u_2,u_3]_V^C,T(u_4))u_5+l(T[u_1,u_2,u_4]_V^C,T(u_3))u_5 \\
&=\Big( [l(T(u_1),T(u_2)),l(T(u_3),T(u_4))]-l([T(u_1),T(u_2),T(u_3)]^C,T(u_4)) \\
&\hspace{1 cm}+ l([T(u_1),T(u_2),T(u_4)]^C,T(u_3)) \Big)u_5
=0.
\end{align*}
This implies that \eqref{3-L-dendriform1} holds.
Moreover, \eqref{3-L-dendriform2} holds. Indeed,
\begin{align*}
& \ne(u_5,[u_1,u_2,u_3]_V^C,u_4)-\ne(u_5,u_3,\{u_1,u_2,u_4\}^h_V) \\
&- \nw(u_1,u_2,\ne(u_5,u_3,u_4)) +\ne(\{u_1,u_2,u_5\}^v_V,u_3,u_4) \\
&=r(T[u_1,u_2,u_3]_V^C,T(u_4))u_5-r(T(u_3),T\{u_1,u_2,u_4\}^h_V)u_5 \\
&-l(T(u_1),T(u_2))r(T(u_3),T(u_4))u_5 +r(T(u_3),T(u_4))\{u_1,u_2,u_5\}^v_V \\
&=r([Tu_1,Tu_2,Tu_3]^C,T(u_4))u_5-r(T(u_3),\{T(u_1),T(u_2),T(u_4)\}^h)u_5 \\
&-l(T(u_1),T(u_2))r(T(u_3),T(u_4))u_5 +r(T(u_3),T(u_4))\mu(T(u_1),T(u_2))u_5 \\
=0
\end{align*}
To prove identity \eqref{3-L-dendriform6}, we compute as follow
\begin{align*}
& \nw(u_1,u_2,\ne(u_5,u_3,u_4))+\ne(u_5,u_1,\{u_2,u_3,u_4\}^h_V) \\
&\hspace{1 cm} -\ne(u_5,u_2,\{u_1,u_3,u_4\}^h_V)-\ne(\{u_1,u_2,u_5\}^v_V,u_3,u_4)  \\
    &=l(Tu_1,Tu_2)r(Tu_3,Tu_4)u_5+r(Tu_1,\{Tu_2,Tu_3,Tu_4\}^h)u_5  \\
    & \hspace{1 cm} -r(Tu_2,\{Tu_1,Tu_3,Tu_4\}^h)u_5-r(Tu_3,Tu_4)\mu(Tu_1,Tu_2)u_5 \\
    &=0.
\end{align*}
The other conclusions follow immediately.
\end{proof}
\begin{cor}\label{CorLDendViaRB}
   Let $(A,\{\c,\c,\c\})$ be a $3$-pre-Lie algebra and  $R: A \to A$ is a Rota-Baxter operator of weight $0$. Then there exists a $3$-L-dendriform algebra structure on $A$ given by
 \begin{align}\label{LDendViaRB}
  \nw(x,y,z)=\{R(x),R(y),z\}  , \quad \ne(x, y,z)= \{x,R(y),R(z)\},  \end{align}
for all $x,y,z \in A$.

\end{cor}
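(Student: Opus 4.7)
The plan is to deduce this corollary as a direct specialization of the preceding theorem, applied to the adjoint representation of the $3$-pre-Lie algebra. Recall that the adjoint representation of $(A,\{\c,\c,\c\})$ is $(A,L,R)$, where $L(x,y)z=\{x,y,z\}$ and $R(x,y)z=\{z,x,y\}$. A Rota-Baxter operator of weight zero is, by definition, exactly an $\mathcal{O}$-operator $T=R\colon A\to A$ associated to the adjoint representation $(A,L,R)$: the identity
\[
\{R(x),R(y),R(z)\}=R\bigl(\{R(x),R(y),z\}+\{R(x),y,R(z)\}+\{x,R(y),R(z)\}\bigr)
\]
is precisely \eqref{O-op 3-pre-Lie} with $(l,r)=(L,R)$, since $L(R(x),R(y))z=\{R(x),R(y),z\}$, $-R(R(x),R(z))y=-\{y,R(x),R(z)\}=\{R(x),y,R(z)\}$ after using skew-symmetry \eqref{3-pre-Lie 0}, and $R(R(y),R(z))x=\{x,R(y),R(z)\}$.

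Next, I would invoke the preceding theorem with $V=A$, $T=R$, $(l,r)=(L,R)$. The general formulas
\[
\nw(u,v,w)=l(Tu,Tv)w,\qquad \ne(u,v,w)=r(Tv,Tw)u,
\]
then specialize to
\[
\nw(x,y,z)=L(R(x),R(y))z=\{R(x),R(y),z\},
\]
\[
\ne(x,y,z)=R(R(y),R(z))x=\{x,R(y),R(z)\},
\]
which are exactly the operations \eqref{LDendViaRB} in the statement. Hence $(A,\nw,\ne)$ inherits a $3$-L-dendriform algebra structure from the theorem.

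I expect no serious obstacle: the only point that merits a sentence of verification is that the weight-zero Rota-Baxter identity on a $3$-pre-Lie algebra coincides with the $\mathcal{O}$-operator condition for the adjoint representation, and this is a short manipulation using \eqref{3-pre-Lie 0}. Once that identification is made, the corollary is an immediate instantiation of the preceding theorem, so no further computation is required.
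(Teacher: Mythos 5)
Your proposal is correct and follows exactly the route the paper intends: the paper offers no separate proof of this corollary, since its definition of a Rota--Baxter operator of weight zero on a $3$-pre-Lie algebra is precisely that of an $\mathcal{O}$-operator with $V=A$ and $(l,r)=(L,R)$ the adjoint representation, so the corollary is the instantiation of the preceding theorem that you describe. Your explicit sign check using skew-symmetry \eqref{3-pre-Lie 0}, showing that $-R(R(x),R(z))y=\{R(x),y,R(z)\}$ and hence that the $\mathcal{O}$-operator identity matches the displayed Rota--Baxter identity, is a worthwhile verification that the paper leaves implicit.
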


\begin{ex}
Let the $4$-dimensional $3$-pre-Lie algebra given in Example \ref{Example2}. Define $R:A\to A$ by
$$R(e_1) =  e_2,\ R(e_2) = -e_1,\ R(e_3) =e_4,\ R(e_4) = e_3.$$
Then  $R$ is a Rota-Baxter operator. Using Corollary \ref{CorLDendViaRB}, we can construct a $3$-Lie dendriform algebra given by the structures $\nw,\ne:A\otimes A\otimes A\to A$ defined in  the basis $(e_1,e_2,e_3,e_4)$, by
$$\left\{
    \begin{array}{ll}
      \nw(e_1,e_2,e_3)=e_4, & \hbox{} \\
\nw(e_1,e_2,e_4)=e_3, & \hbox{} \\
\ne(e_1,e_1,e_3)=\ne(e_2,e_2,e_3)=e_3, & \hbox{} \\
\ne(e_1,e_1,e_4)=\ne(e_2,e_2,e_4)=e_4, & \hbox{} \\
\ne(e_1,e_3,e_3)=-\ne(e_1,e_4,e_4)=-e_2, & \hbox{} \\
      \ne(e_2,e_3,e_3)=-\ne(e_2,e_4,e_4)=-e_1 & \hbox{}
    \end{array}
  \right.
$$
and all the other products are zero.
  \end{ex}

\begin{pro}\label{3Lden by invert O-op}
 Let $(A,\{\c,\c,\c\})$ be a $3$-pre-Lie algebra.  Then there exists a compatible
3-L-dendriform  algebra if and only if there exists an invertible $\mathcal{O}$-operator on $A$.
\end{pro}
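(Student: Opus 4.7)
\noindent\emph{Proof plan.} The statement mirrors Proposition~\ref{pro:preLieOoper} one level up: $3$-pre-Lie compatibility on a $3$-Lie algebra is replaced by $3$-L-dendriform compatibility on a $3$-pre-Lie algebra. I would handle the two implications separately, with transport of structure along the invertible operator providing the mechanism of the ``if'' direction and the identity map serving as the operator witnessing the ``only if'' direction.

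For the ``if'' direction, start from an invertible $\mathcal{O}$-operator $T:V\to A$ associated to a representation $(V,l,r)$ of $(A,\{\c,\c,\c\})$. The theorem preceding this Proposition already supplies a $3$-L-dendriform structure on $V$ given by $\nw(u,v,w)=l(Tu,Tv)w$ and $\ne(u,v,w)=r(Tv,Tw)u$, together with an induced $3$-L-dendriform structure on $T(V)=A$. Explicitly, this transport takes the form
\[
\nw(x,y,z):=T\bigl(l(x,y)T^{-1}z\bigr),\qquad \ne(x,y,z):=T\bigl(r(y,z)T^{-1}x\bigr),
\]
so the $3$-L-dendriform axioms \eqref{3-L-dendriform0}--\eqref{3-L-dendriform6} on $A$ are inherited for free from those on $V$. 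The only substantive verification is compatibility: expanding $\{x,y,z\}^h=\nw(x,y,z)+\ne(x,y,z)-\ne(y,x,z)$ via the above formulas and applying the $\mathcal{O}$-operator identity \eqref{O-op 3-pre-Lie} at $u=T^{-1}x$, $v=T^{-1}y$, $w=T^{-1}z$ yields $\{x,y,z\}^h=\{x,y,z\}$, which is exactly the required compatibility.

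For the converse, assume $(A,\nw,\ne)$ is a compatible $3$-L-dendriform structure on $(A,\{\c,\c,\c\})$. By the proposition preceding the construction theorem, $(A,L_{\nw},R_{\ne})$ is a representation of the horizontal associated $3$-pre-Lie algebra, which by compatibility is $(A,\{\c,\c,\c\})$ itself. I would then show that $T=\mathrm{id}_A$ is an invertible $\mathcal{O}$-operator with respect to this representation: unwinding \eqref{O-op 3-pre-Lie} gives
\[
L_{\nw}(x,y)z-R_{\ne}(x,z)y+R_{\ne}(y,z)x=\nw(x,y,z)+\ne(x,y,z)-\ne(y,x,z)=\{x,y,z\}^h=\{x,y,z\},
\]
which holds by the definitions of $L_{\nw}, R_{\ne}$ and the compatibility hypothesis.

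The main obstacle is really just bookkeeping: one has to place the $T$'s and $T^{-1}$'s in the transported operations so that the definition of $\{\c,\c,\c\}^h$ lines up precisely with the left-hand side of \eqref{O-op 3-pre-Lie}. Once those formulas are pinned down, both implications reduce to a single application of that identity together with the construction theorem already established.
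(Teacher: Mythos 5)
Your proposal is correct and follows essentially the same route as the paper: transport the $3$-L-dendriform structure from $V$ to $T(V)=A$ via $\nw(x,y,z)=T(l(x,y)T^{-1}z)$, $\ne(x,y,z)=T(r(y,z)T^{-1}x)$ and verify compatibility with the $\mathcal{O}$-operator identity, then take $T=\mathrm{id}_A$ with the representation $(A,L_{\nw},R_{\ne})$ for the converse. In fact your converse is slightly more careful than the paper's, which merely asserts that the identity map is an $\mathcal{O}$-operator associated to ``$(A,L,R)$'' without writing out the verification you give.
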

 \begin{proof}
 Let $T$ be an invertible $\mathcal{O}$-operator of $A$ associated to a representation $(V, l,r)$.
Then there exists a $3$-L-dendriform algebra structure on $V$ defined by
 \begin{align}
  \nw(u,v,w)=l(Tu,Tv)w  , \quad \ne(u, v,w)= r(Tv,Tw)u, \forall u,v,w \in V.
 \end{align}
In addition there exists a $3$-L-dendriform algebra structure on $T(V)=A$ given by
 \begin{align}
  \nw(Tu,Tv,Tw)=T(l(Tu,Tv)w)  , \quad \ne(Tu, Tv,Tw)= T(r(Tv,Tw)u), \forall u,v,w \in V.
 \end{align}
 If we put $x=Tu,\ y=Tv$ and $z=Tw$, we get
  \begin{align}
  \nw(x,y,z)=T(l(x,y)T^{-1}(z))  , \quad \ne(x, y,z)= T(r(y,z)T^{-1}(x)), \forall w,y,z \in A.
 \end{align}
It is a compatible $3$-L-dendriform algebra structure on $A$. Indeed,
\begin{align*}
&  \nw(x,y,z)+\ne(x,y,z)-\ne(y,x,z) \\
=& T(l(x,y)T^{-1}(z)) + T(r(y,z)T^{-1}(x))-T(r(x,z)T^{-1}(y)) \\
=& \{TT^{-1}(x),TT^{-1}(y),TT^{-1}(z)\}=\{x,y,z\}.
\end{align*}
Conversely,   let $(A,\{\c,\c,\c\})$ be a $3$-pre-Lie algebra and $(A,\nw,\ne)$ its compatible $3$-L-dendriform algebra.  Then the identity map $id: A \to A$ is an $\mathcal{O}$-operator of  $(A,\{\c,\c,\c\})$ associated to $(A,L,R)$.
 \end{proof}

\begin{defi}
Let $(A,\{\c,\c,\c\})$ be a $3$-pre-Lie algebra and $B$ be a skew-symmetric bilinear form on $A$.  We say that $B$ is closed if it satisfies
\begin{align}\label{symplectic bilinear form}
B(\{x,y,z\},w)-B(z,[x,y,w]^C)-B(y,\{w,x,z\})+B(x,\{w,y,z\})=0,
\end{align}
for any $x,y,z,w \in A$.  If in addition $B$ is nondegenerate  , then $B$ is called symplectic.

A $3$-pre-Lie algebra $(A,\{\c,\c,\c\})$ equipped with a symplectic form is called a symplectic $3$-pre-Lie algebra and denoted by  $(A,\{\c,\c,\c\},B)$.
\end{defi}

\begin{pro}
Let $(A,\{\c,\c,\c\},B)$ be a symplectic $3$-pre-Lie algebra. Then there exists a compatible $3$-L-dendriform algebra structure on $A$ given by
\begin{align}\label{3Ldend by  form}
B(\nw(x,y,z),w)=B(z,[x,y,w]^C),\quad B(\ne(x,y,z),w)=-B(x,\{w,y,z\}), \forall x,y,z,w \in A.
\end{align}
\end{pro}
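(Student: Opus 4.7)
The plan is to use the nondegeneracy of $B$ to define $\nw$ and $\ne$ by the formulas in \eqref{3Ldend by form} and then verify each defining identity of a $3$-L-dendriform algebra by a $B$-pairing argument. Since $B$ is nondegenerate, the equalities in \eqref{3Ldend by form} uniquely specify trilinear maps $\nw,\ne:A^{\otimes 3}\to A$. The skew-symmetry axiom \eqref{3-L-dendriform0} is immediate: for any $w\in A$,
\[
B\bigl(\nw(x_1,x_2,x_3)+\nw(x_2,x_1,x_3),w\bigr)=B\bigl(x_3,[x_1,x_2,w]^C+[x_2,x_1,w]^C\bigr)=0,
\]
by skew-symmetry of $[\c,\c,\c]^C$ in its first two slots.

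I would next establish the compatibility $\{x,y,z\}^h=\{x,y,z\}$, which is what the word \emph{compatible} in the statement asks for. Substituting \eqref{3Ldend by form} into the definition \eqref{accolade horizintal} of $\{\c,\c,\c\}^h$ and pairing against an arbitrary $w\in A$ via $B$ yields
\[
B(\{x,y,z\}^h,w)=B(z,[x,y,w]^C)-B(x,\{w,y,z\})+B(y,\{w,x,z\}),
\]
which is precisely $B(\{x,y,z\},w)$ after rearrangement of the closedness condition \eqref{symplectic bilinear form}; nondegeneracy of $B$ then gives $\{\c,\c,\c\}^h=\{\c,\c,\c\}$. The vertical bracket $\{\c,\c,\c\}^v$ is handled similarly.

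The hard part is the verification of the six axioms \eqref{3-L-dendriform1}--\eqref{3-L-dendriform6}. My uniform strategy is to pair both sides of each axiom with an arbitrary $w\in A$ through $B$, apply \eqref{3Ldend by form} to rewrite every occurrence of $\nw$ and $\ne$ as a $B$-pairing involving $[\c,\c,\c]^C$ and $\{\c,\c,\c\}$, and then reduce to $0$ by invoking the Fundamental Identity \eqref{eq:de1}, the $3$-pre-Lie axioms \eqref{3-pre-Lie 0}--\eqref{3-pre-Lie 2}, and repeated applications of closedness \eqref{symplectic bilinear form}. The axioms \eqref{3-L-dendriform1} and \eqref{3-L-dendriform4} involve only $\nw$ and should reduce, after one application of \eqref{3Ldend by form} on each side, to the Fundamental Identity and to the $3$-pre-Lie identity \eqref{3-pre-Lie 2}, respectively. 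The mixed axioms \eqref{3-L-dendriform2}, \eqref{3-L-dendriform3}, \eqref{3-L-dendriform5}, \eqref{3-L-dendriform6} are the main obstacle: the nested occurrences of $\nw$ and $\ne$ force repeated passes of closedness in order to transport arguments into the correct slot of $B$ before a $3$-pre-Lie identity can be applied.

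A more conceptual alternative is via Proposition \ref{3Lden by invert O-op}: the linear isomorphism $T=(B^{\flat})^{-1}:A^*\to A$ induced by $B^{\flat}(x)(y)=B(x,y)$ can be checked, using closedness, to be an invertible $\mathcal{O}$-operator of $(A,\{\c,\c,\c\})$ associated with an appropriate dual-type representation on $A^*$. Proposition \ref{3Lden by invert O-op} then automatically produces a compatible $3$-L-dendriform structure on $A$, and transporting the construction back through $B^{\flat}$ identifies it with the one given by \eqref{3Ldend by form}, thus yielding a short proof that bypasses the case-by-case verification of \eqref{3-L-dendriform1}--\eqref{3-L-dendriform6}.
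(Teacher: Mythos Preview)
Your proposal is correct and contains two strategies. The paper uses precisely your ``more conceptual alternative'': it defines $T:A^*\to A$ by $\langle T^{-1}x,y\rangle=B(x,y)$, checks (via closedness \eqref{symplectic bilinear form}) that $T$ is an invertible $\mathcal{O}$-operator of $(A,\{\c,\c,\c\})$ associated to the specific dual representation $(A^*,ad^*,-R^*)$, invokes Proposition~\ref{3Lden by invert O-op} to obtain the compatible $3$-L-dendriform structure $\nw(x,y,z)=T(ad^*(x,y)T^{-1}(z))$, $\ne(x,y,z)=-T(R^*(y,z)T^{-1}(x))$, and then verifies by a short pairing computation that these formulas are equivalent to \eqref{3Ldend by  form}. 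Your direct axiom-by-axiom verification via $B$-pairings is a valid alternative that the paper does not pursue; it is more elementary in that it avoids the machinery of dual representations and $\mathcal{O}$-operators, but at the cost of six separate (and, for the mixed axioms, fairly intricate) computations. The $\mathcal{O}$-operator route buys you all six identities \eqref{3-L-dendriform1}--\eqref{3-L-dendriform6} for free once the earlier theorem on $\mathcal{O}$-operators is in hand, leaving only the single verification that closedness of $B$ translates into the $\mathcal{O}$-operator condition for $T$ with respect to $(ad^*,-R^*)$.
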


\begin{proof}
Define the linear map $T: A^* \to A$ by $\langle T^{-1}x,y\rangle=B(x,y)$.  Using Eq. \eqref{symplectic bilinear form}, we obtain that $T$ is an invertible $\mathcal{O}$-operator  on $A$ associated to the dual representation $(A^*, ad^*,-R^*)$.   By Proposition \ref{3Lden by invert O-op}, there exists a compatible $3$-L-dendriform algebra structure given by
  \begin{align}
  \nw(x,y,z)=T(ad^*(x,y)T^{-1}(z))  , \quad \ne(x, y,z)= -T(R^*(y,z)T^{-1}(x)), \forall w,y,z \in A.
 \end{align}
Then we have
\begin{align*}
    B(\nw(x,y,z),w)= & B(T(ad^*(x,y)T^{-1}(z)) ,w)=\langle ad^*(x,y)T^{-1}(z),w\rangle \\
    =& \langle T^{-1}(z),[x,y,w]^C\rangle=B(z,[x,y,w]^C)
\end{align*}
and
\begin{align*}
    B(\ne(x,y,z),w)= & -B(T(R^*(y,z)T^{-1}(x))) ,w)=-\langle R^*(y,z)T^{-1}(x),w\rangle \\
    = &-\langle T^{-1}(x),\{w,y,z\}\rangle=-B(x,\{w,y,z\}).
\end{align*}
The proof is finished.
\end{proof}

\begin{cor}
Let $(A,\{\c,\c,\c\},B)$ be a symplectic $3$-pre-Lie algebra and let $(A,[\c,\c,\c]^C)$ be its associated $3$-Lie algebra.  Then there exists a $3$-pre-Lie algebraic structure $(A,\{\c,\c,\c\}')$ on $A$ given by
\begin{align}
B(\{x,y,z\}',w)=B(z,[x,y,w]^C)-B(z,\{w,x,y\})+B(z,\{w,y,x\})=0.
\end{align}
\end{cor}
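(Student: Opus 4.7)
The strategy is to invoke the preceding proposition to manufacture a compatible $3$-L-dendriform algebra on $A$, and then read off the \emph{vertical} $3$-pre-Lie bracket $\{\cdot,\cdot,\cdot\}^v$ produced by part (2) of Proposition~\ref{3LDendTo3PreLie}. The claimed formula will then just be the result of pairing $\{\cdot,\cdot,\cdot\}^v$ against $B$ on the right.

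More concretely, the plan is as follows. First, by the previous proposition the symplectic form $B$ yields a compatible $3$-L-dendriform algebra $(A,\nw,\ne)$ on $A$ satisfying
\begin{align*}
B(\nw(x,y,z),w) &= B(z,[x,y,w]^C),\\
B(\ne(x,y,z),w) &= -B(x,\{w,y,z\}),
\end{align*}
for all $x,y,z,w\in A$. Second, Proposition~\ref{3LDendTo3PreLie}(2) then guarantees that the vertical bracket
\[
\{x,y,z\}^v := \nw(x,y,z)+\ne(z,x,y)-\ne(z,y,x)
\]
endows $A$ with a $3$-pre-Lie algebra structure. So I take $\{x,y,z\}' := \{x,y,z\}^v$ as my candidate.

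The only thing left is to identify this structure via the stated formula. Using bilinearity of $B$ and the two identities displayed above,
\begin{align*}
B(\{x,y,z\}',w) &= B(\nw(x,y,z),w) + B(\ne(z,x,y),w) - B(\ne(z,y,x),w)\\
&= B(z,[x,y,w]^C) - B(z,\{w,x,y\}) + B(z,\{w,y,x\}).
\end{align*}
Since $B$ is nondegenerate, this relation determines $\{x,y,z\}'$ uniquely, so the stated formula does define a genuine operation on $A$, and by the above construction this operation is a $3$-pre-Lie bracket.

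There is essentially no obstacle: all the heavy lifting (namely, that $B$-duality of $[\cdot,\cdot,\cdot]^C$ and $\{\cdot,\cdot,\cdot\}$ yields a compatible $3$-L-dendriform structure, and that each $3$-L-dendriform algebra carries a vertical $3$-pre-Lie bracket) has already been done in the two preceding results. The corollary is therefore just a matter of packaging, the only mildly delicate point being to keep track of the signs and the permutation of the arguments of $\ne$ when translating the vertical bracket into the pairing with $B$.
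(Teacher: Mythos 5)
Your proposal is correct and is exactly the argument the paper intends (the corollary is stated without proof): the bracket $\{\cdot,\cdot,\cdot\}'$ is the vertical bracket $\{\cdot,\cdot,\cdot\}^v$ of the compatible $3$-L-dendriform algebra supplied by the preceding proposition, it is $3$-pre-Lie by Proposition~\ref{3LDendTo3PreLie}(2), and your pairing computation with the sign and argument bookkeeping for $\nearrow$ reproduces the stated identity, the trailing ``$=0$'' in the paper's display being a typo rather than an extra condition. Nothing further is needed.
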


\begin{lem}\label{commuting rota-baxter op}
Let $\{R_1,R_2\}$ be a pair of of commuting Rota-Baxter operators (of weight zero) on a $3$-Lie algebra $(A, [\c,\c,\c])$. Then $R_2$ is a Rota-Baxter operator (of weight zero) on the associated $3$-pre-Lie algebra
defined by  $\{x,y,z\}=[R_1(x),R_1(y),z]$.
\end{lem}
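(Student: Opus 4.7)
The plan is to unpack the target Rota--Baxter identity for $R_2$ on the 3-pre-Lie algebra $(A,\{\cdot,\cdot,\cdot\})$ directly in terms of the ambient 3-Lie bracket, push every $R_1$ past every $R_2$ using the commuting hypothesis, and then recognize what remains as the Rota--Baxter identity for $R_2$ on $(A,[\cdot,\cdot,\cdot])$.

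First I will spell out what "Rota--Baxter on the 3-pre-Lie algebra" means. Specializing the $\mathcal{O}$-operator definition for 3-pre-Lie algebras to the adjoint representation $(A,L,R)$, the condition on $R_2$ reads
\begin{align*}
\{R_2x,R_2y,R_2z\}=R_2\bigl(\{R_2x,R_2y,z\}-\{y,R_2x,R_2z\}+\{x,R_2y,R_2z\}\bigr),
\end{align*}
for all $x,y,z\in A$. This is the identity I need to verify.

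Next I substitute $\{u,v,w\}=[R_1u,R_1v,w]$ into both sides. The left-hand side becomes $[R_1R_2x,R_1R_2y,R_2z]$, and the three terms in the argument on the right become
\begin{align*}
[R_1R_2x,R_1R_2y,z],\qquad [R_1y,R_1R_2x,R_2z],\qquad [R_1x,R_1R_2y,R_2z].
\end{align*}
Now the commuting hypothesis $R_1R_2=R_2R_1$ lets me replace every $R_1R_2$ by $R_2R_1$. Setting $u:=R_1x$, $v:=R_1y$, $w:=z$, the desired identity turns into
\begin{align*}
[R_2u,R_2v,R_2w]=R_2\bigl([R_2u,R_2v,w]-[v,R_2u,R_2w]+[u,R_2v,R_2w]\bigr).
\end{align*}

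Finally, using the skew-symmetry of the 3-Lie bracket in its first two slots, $-[v,R_2u,R_2w]=[R_2u,v,R_2w]$, the right-hand side becomes $R_2\bigl([R_2u,R_2v,w]+[R_2u,v,R_2w]+[u,R_2v,R_2w]\bigr)$, which equals $[R_2u,R_2v,R_2w]$ by the Rota--Baxter property of $R_2$ on $(A,[\cdot,\cdot,\cdot])$. This closes the argument. The only genuinely delicate point is the sign bookkeeping in the 3-pre-Lie Rota--Baxter identity (which is not skew-symmetric in the first two slots), but once the commutation is used to expose a 3-Lie bracket, skew-symmetry takes care of the remaining discrepancy.
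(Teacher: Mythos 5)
Your proof is correct and takes essentially the same route as the paper's: substitute the definition $\{x,y,z\}=[R_1(x),R_1(y),z]$, use the commutation $R_1R_2=R_2R_1$ to expose the $3$-Lie Rota--Baxter identity for $R_2$ applied to $(R_1(x),R_1(y),z)$, and translate back. The paper merely runs the computation forward from $\{R_2(x),R_2(y),R_2(z)\}$ and reconciles the sign using skew-symmetry of the $3$-pre-Lie product in its first two arguments, whereas you verify the identity in reverse and reconcile the sign at the $3$-Lie level; this is a cosmetic difference only.
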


\begin{proof}
For any $x,y,z \in A$, we have
\begin{align*}
&\{R_2(x),R_2(y),R_2(z)\}=[R_1R_2(x),R_1R_2(y),R_2(z)] =[R_2R_1(x),R_2R_1(y),R_2(z)] \\
&=R_2([R_2R_1(x),R_2R_1(y),z]+[R_1(x),R_2R_1(y),R_2(z)]  +
[R_2R_1(x),R_1(y),R_2(z)] ) \\
&=R_2(\{R_2(x),R_2(y),z\}+\{x,R_2(y),R_2(z)\}+\{R_2(x),y,R_2(z)\}).
\end{align*}
Hence $R_2$ is a Rota-Baxter operator (of weight zero) on the $3$-pre-Lie algebra
$(A,\{\c,\c,\c\})$.
\end{proof}

\begin{pro}
Let $\{R_1,R_2\}$ be a pair of of commuting Rota-Baxter operators (of weight zero) on a $3$-Lie algebra $(A, [\c,\c,\c])$. Then there exists a $3$-L-dendriform algebra structure on $A$ defined by
\begin{align*}
\nw(x,y,z)=[R_1R_2(x),R_1R_2(y),z],\quad \ne(x,y,z)=[R_1(x),R_1R_2(y),R_2(z)], \forall x,y,z \in A.
\end{align*}
\end{pro}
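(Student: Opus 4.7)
The plan is to obtain the statement as an immediate two-step corollary of results already proved in the paper, rather than by directly verifying the seven $3$-L-dendriform axioms \eqref{3-L-dendriform0}--\eqref{3-L-dendriform6}. The underlying idea is the chain of reductions $3$-Lie $\xrightarrow{R_1}$ $3$-pre-Lie $\xrightarrow{R_2}$ $3$-L-dendriform, each arrow being the passage through an $\mathcal{O}$-operator/Rota-Baxter construction.

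First, I would apply Proposition \ref{pro:3preLieT} to $R_1$. A Rota-Baxter operator of weight zero on the $3$-Lie algebra $(A,[\cdot,\cdot,\cdot])$ is precisely an $\mathcal{O}$-operator associated to the adjoint representation of $A$ on itself, so that proposition produces a $3$-pre-Lie algebra structure on $A$ defined by
$$\{x,y,z\} := [R_1 x, R_1 y, z], \qquad \forall x,y,z \in A.$$
Next, since $R_1$ and $R_2$ commute and both are Rota-Baxter operators on $(A,[\cdot,\cdot,\cdot])$, the just-established Lemma \ref{commuting rota-baxter op} tells us that $R_2$ is itself a Rota-Baxter operator of weight zero on the $3$-pre-Lie algebra $(A,\{\cdot,\cdot,\cdot\})$.

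Finally, I would apply Corollary \ref{CorLDendViaRB} to the $3$-pre-Lie algebra $(A,\{\cdot,\cdot,\cdot\})$ equipped with the Rota-Baxter operator $R_2$. It yields a $3$-L-dendriform structure on $A$ given by
$$\nw(x,y,z) = \{R_2 x, R_2 y, z\}, \qquad \ne(x,y,z) = \{x, R_2 y, R_2 z\}.$$
Unfolding the definition of $\{\cdot,\cdot,\cdot\}$ and using $R_1 R_2 = R_2 R_1$ immediately gives
$$\nw(x,y,z) = [R_1 R_2 x, R_1 R_2 y, z], \qquad \ne(x,y,z) = [R_1 x, R_1 R_2 y, R_2 z],$$
which is the announced formula. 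Since each step invokes a previously proved statement, no fresh verification of a compatibility identity is needed; the only expected obstacle is purely notational — keeping the roles of $R_1$ and $R_2$ and the order of arguments in $\ne$ straight when unfolding, and ensuring that the commutation $R_1 R_2 = R_2 R_1$ is used in the correct places (especially inside $\nw$, where it collapses $R_1 R_2$ with $R_2 R_1$ on the two arguments).
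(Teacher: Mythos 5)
Your proposal is correct and follows exactly the paper's own route: the paper proves this proposition by citing Lemma \ref{commuting rota-baxter op} (that $R_2$ is a Rota--Baxter operator on the $3$-pre-Lie algebra $\{x,y,z\}=[R_1(x),R_1(y),z]$ induced by $R_1$) together with Corollary \ref{CorLDendViaRB}, which is precisely your two-step chain. Your unfolding of the formulas, including the use of $R_1R_2=R_2R_1$ inside $\nw$, matches the stated structure, so nothing is missing.
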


\begin{proof}
It follows immediately from Lemma \ref{commuting rota-baxter op} and Corollary \ref{CorLDendViaRB}.
\end{proof}

\begin{rmk}
Let $(A,[\c,\c])$ be a Lie-algebra. Recall that a trace function $\tau: A \to \mathbb{K}$  is a linear map such that $\tau([x,y])=0,\ \forall  x,y \in A$. When $\tau$ is a trace function, it is well known \cite{induced 3-Lie} that $(A,[\c,\c,\c]_{\tau})$ is a $3$-Lie algebra, where
$$[x,y,z]_{\tau}:=\circlearrowleft_{x,y,z \in A}\tau(x)[y,z],\  \forall x,y,z \in A.$$
Now let $\{R_1,R_2\}$ be a pair of of commuting Rota-Baxter operators (of weight zero) on the $3$-Lie algebra $(A, [\c,\c,\c]_{\tau})$. Then we can construct a $3$-L-dendriform structure on $A$, given by
\begin{align*}
 \nw(x,y,z)=& \tau (R_1R_2(x))[R_1R_2(y),z]+ \tau (R_1R_2(y))[z,R_1R_2(x)]+
      +\tau(z)[R_1R_2(x),R_1R_2(y)], \\
\ne(x,y,z)=& \tau (R_1(x))[R_1R_2(y),R_2(z)]+\tau(R_1R_2(y))[R_2(z),R_1(x)]
  + \tau(R_2(z)) [R_1(x),R_1R_2(y)],
\end{align*}
for any $x,y,z \in A$.

\end{rmk}
\section{Generalized derivations of $3$-L-dendriform algebras}
This section is devoted to investigate generalized derivations of   $3$-Lie   algebras, $3$-pre-Lie   algebras and $3$-L-dendriform algebras. Throughout the sequel $(A,<\cdot,\cdot,\cdot>)$ denotes a $3$-(pre)-Lie   algebra.

\begin{defi}
 A linear map $D: A \to A$ is said to be a derivation on $A$, if it satisfies the following condition (for $x,y ,z\in A$)
\begin{align}\label{TernaryDerivation}
& D(<x, y,z>)=<D(x) , y,z>+ <x,D(y),z>+<x,y,D(z)>.
\end{align}
\end{defi}

\begin{defi}
  Let $(A,\nw,\ne)$ be a $3$-L-dendriform algebra. A linear map $D: A \to A$ is said to be a derivation on $A$, if it satisfies the following condition ( for $x,y ,z\in A$)
\begin{align}\label{3DenAlgDerivation1}
& D(\nw(x, y,z))=\nw(D(x) , y,z)+ \nw(x,D(y),z)+\nw(x,y,D(z)),\\
\label{3DenAlgDerivation2}&D(\ne(x, y,z))=\ne(D(x) , y,z)+ \ne(x,D(y),z)+\ne(x,y,D(z)).
\end{align}
\end{defi}

We denote the set of all derivations on $A$ by  $Der(A)$ .
We can easily show that $Der(A)$ is equipped with a Lie algebra structure. In fact, for $D \in  Der(A)$  and $D' \in  Der(A)$, we have $[D,D'] \in Der(A)$, where $[D,D']$  is the standard commutator defined by $[D,D']= DD'-D'D$.
\begin{defi}
A linear mapping $D\in End(A)$ is said to be a quasi-derivation of $A$ if there exist linear mapping $D' \in End(A)$ such that
\begin{eqnarray}\label{TernaryQDerivation}
& D'(<x, y,z>)=<D(x) , y,z>+ <x,D(y),z>+<x,y,D(z)>,
\end{eqnarray}
for all $x,y,z \in A$. We say that $D$ associates with $D'$.
\end{defi}
\begin{defi}
  Let $(A,\nw,\ne)$ be a $3$-L-dendriform algebra. A linear map $D: A \to A$ is said to be a quasi-derivation on $A$, if it satisfies the following condition
\begin{align}\label{3DenAlgQDerivation1}
& D'(\nw(x, y,z))=\nw(D(x) , y,z)+ \nw(x,D(y),z)+\nw(x,y,D(z)),\\
\label{3DenAlgQDerivation2}&D'(\ne(x, y,z))=\ne(D(x) , y,z)+ \ne(x,D(y),z)+\ne(x,y,D(z)),
\end{align}
for all $x,y,z \in A$. We say that $D$ associates with $D'$.
\end{defi}
\begin{defi}
A linear mapping $D\in End(A)$ is said to be a generalized derivation  of $A$ if there exist linear mappings $D', D'',D''' \in End(A)$  such that
\begin{eqnarray}\label{TernaryGDerivation}
&&D'''(<x, y,z>)=<D(x) , y,z>+ <x,D'(y),z>+<x,y,D''(z)>,~~
\end{eqnarray}
for all $x,y,z \in A$. We also say that $D$ associates with $(D',D'',D''')$.
\end{defi}
\begin{defi}
  A linear map $D: A \to A$ is said to be a generalized derivation of a $3$-L-dendriform algebra  $(A,\nw,\ne)$, if it satisfies the following condition
\begin{align}\label{3DenAlgQDerivation1}
& D'''(\nw(x, y,z))=\nw(D(x) , y,z)+ \nw(x,D'(y),z)+\nw(x,y,D''(z)),\\
\label{3DenAlgQDerivation2}&D'''(\ne(x, y,z))=\ne(D(x) , y,z)+ \ne(x,D'(y),z)+\ne(x,y,D''(z)),
\end{align}
for all $x,y,z \in A$. We say that $D$ associates with $(D',D'',D''')$.
\end{defi}
The sets of  quasi-derivations and  generalized derivations will be denoted by  $QDer(A)$ and $GDer(A)$ , respectively.
It is easy to see that
$$Der(A)\subset QDer(A)\subset  GDer(A).$$
\begin{defi}
A linear map $\Theta \in End(A)$ is said to be a centroid of $A$ if
\begin{align}\label{TernaryCentroid}
& \Theta(<x, y,z>)=<\Theta(x), y,z>=<x,\Theta( y),z>=<x, y,\Theta(z)>,~~\forall~~ x,y,z \in A.
\end{align}
\end{defi}
\begin{defi}
  A linear map $\Theta: A \to A$ is said to be a Centroid of a $3$-L-dendriform algebra  $(A,\nw,\ne)$, if it satisfies the following conditions:
\begin{align}\label{3DenAlgCentroid1}
& \Theta(\nw(x, y,z))=\nw(\Theta(x) , y,z)= \nw(x,\Theta(y),z)=\nw(x,y,\Theta(z)),\\
\label{3DenAlgCentroid2}&\Theta(\ne(x, y,z))=\ne(\Theta(x) , y,z)= \ne(x,\Theta(y),z)=\ne(x,y,\Theta(z)),\ \forall \ x,y,z \in A.
\end{align}

\end{defi}
The  set of centroids of $A$ is denoted by $C(A)$.
\begin{defi}
 A linear map $\Theta \in End(A)$ is said to be a quasi-centroid of $A$ if
\begin{align}\label{TernaryQCentroid}
& <\Theta(x), y,z>=<x,\Theta( y),z>=<x, y,\Theta(z)>,~~\forall~~ x,y,z \in A.
\end{align}
\end{defi}
\begin{defi}
  A linear map $\Theta: A \to A$ is said to be a quasi-Centroid of a $3$-L-dendriform algebra  $(A,\nw,\ne)$, if it satisfies the following conditions:
\begin{align}\label{3DenAlgQCentroid1}
& \nw(\Theta(x) , y,z)= \nw(x,\Theta(y),z)=\nw(x,y,\Theta(z)),\\
\label{3DenAlgQCentroid2}&\ne(\Theta(x) , y,z)= \ne(x,\Theta(y),z)=\ne(x,y,\Theta(z)),\ \forall \ x,y,z \in A.
\end{align}

\end{defi}
The set of quasi-centroids of $A$ is denoted by $QC(A)$.
It is obvious  that $C(A)\subset QC(A)$.
\begin{pro}
Let $(A,\nw,\ne)$ be a $3$-L-dendriform algebra,  $D\in Der(A)$  and $\Theta\in C(A)$. Then
\begin{eqnarray*}
&& [D,\Theta]\in C(A).
\end{eqnarray*}
\end{pro}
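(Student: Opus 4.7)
The plan is a direct computation showing that the commutator $[D,\Theta] = D\Theta - \Theta D$ satisfies the centroid identities \eqref{3DenAlgCentroid1} and \eqref{3DenAlgCentroid2}. The key mechanism is that $\Theta$ can be moved in and out of any of the three slots, while $D$ satisfies a Leibniz rule on $\nw$ and $\ne$; combining the two makes all the mixed terms (those involving $\Theta$ in some slot and $D$ in another) cancel exactly.

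First I would treat the $\nw$-operation. To verify $[D,\Theta](\nw(x,y,z)) = \nw([D,\Theta](x),y,z)$, I would compute $D\Theta(\nw(x,y,z))$ by first pushing $\Theta$ into the first slot (using $\Theta\in C(A)$) and then applying the Leibniz rule for $D$:
\begin{align*}
D\Theta(\nw(x,y,z)) &= D(\nw(\Theta(x),y,z)) \\
&= \nw(D\Theta(x),y,z)+\nw(\Theta(x),D(y),z)+\nw(\Theta(x),y,D(z)).
\end{align*}
On the other hand, applying $D$ first and then $\Theta$ in the first slot of each resulting term gives
\begin{align*}
\Theta D(\nw(x,y,z)) &= \Theta\bigl(\nw(D(x),y,z)+\nw(x,D(y),z)+\nw(x,y,D(z))\bigr)\\
&= \nw(\Theta D(x),y,z)+\nw(\Theta(x),D(y),z)+\nw(\Theta(x),y,D(z)).
\end{align*}
Subtracting the second from the first, the middle and last terms cancel and I obtain $[D,\Theta](\nw(x,y,z)) = \nw([D,\Theta](x),y,z)$.

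Next, I would re-run the same two computations but this time pushing $\Theta$ into the second slot in the first line (using $\Theta(\nw(x,y,z))=\nw(x,\Theta(y),z)$) and, in the second line, applying the ``second-slot'' version of the centroid identity to each of the three terms coming from the Leibniz rule. The same cancellation yields $[D,\Theta](\nw(x,y,z)) = \nw(x,[D,\Theta](y),z)$. Using the third-slot version gives $[D,\Theta](\nw(x,y,z)) = \nw(x,y,[D,\Theta](z))$. The identical argument with $\ne$ in place of $\nw$ establishes \eqref{3DenAlgCentroid2}, since both the Leibniz rule for $D$ and all three slot identities for $\Theta$ hold for $\ne$ as well.

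There is no real obstacle here; the only thing to be careful about is keeping track of which slot is used for the centroid identity at each step, so that the chosen slot of the starting expression $\Theta(\nw(x,y,z))$ matches the slot that survives after cancellation. The proof never uses the defining relations \eqref{3-L-dendriform1}--\eqref{3-L-dendriform6} of a $3$-L-dendriform algebra; only the slot-wise centroid property and the slot-wise Leibniz rule are needed, so the result is essentially formal.
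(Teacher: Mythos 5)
Your proposal is correct and follows essentially the same route as the paper: compute $D\Theta(\nw(x,y,z))$ by pushing $\Theta$ into a slot and applying the Leibniz rule, compute $\Theta D(\nw(x,y,z))$ the other way around, and subtract so the mixed terms cancel, then repeat slot by slot and for $\ne$. The paper writes out exactly this computation for the first slot of $\nw$ and handles the remaining slots and $\ne$ ``similarly,'' just as you do.
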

\begin{proof} Assume that $D\in Der(A),~~\Theta \in C(A)$. For arbitrary $x,y \in A$, we have
\begin{align}
 D\Theta( \nw(x, y,z))&=D( \nw(\Theta (x), y,z))\nonumber
\\
 &=  \nw(D\Theta (x), y,z)+ \nw(\Theta (x),D( y),z)+ \nw(\Theta (x), y,D(z))\label{ternarycent1}
\end{align}
and
\begin{align}
 \Theta D( \nw(x, y,z))&=\Theta (\nw(D(x), y,z)+ \nw(\Theta (x),D( y),z)+ \nw(\Theta (x), y,D(z)))\nonumber
\\
 &=  \nw(\Theta D (x), y,z)+ \nw(\Theta (x),D( y),z)+ \nw(\Theta (x), y,D(z))\label{ternarycent2}.
\end{align}
By making the difference of  equations
\eqref{ternarycent1} and \eqref{ternarycent2}, we get
\begin{align*}
   & [D,\Theta] (\nw(x, y,z))=\nw([D,\Theta] (x), y,z).
\end{align*}
Similarly we can proof that, for all $x,y,z\in A,$ $$[D,\Theta] (\nw(x, y,z))=\nw(x,[D,\Theta] ( y),z),\  \ [D,\Theta] (\nw(x, y,z>))=\nw(x, y,[D,\Theta] (z))$$
and
 $$[D,\Theta] (\ne(x, y,z))=\ne([D,\Theta] (x), y,z)=\ne(x,[D,\Theta] ( y),z)=\ne(x, y,[D,\Theta] (z)).$$
\end{proof}
\begin{pro} $C(A)\subseteq QDer(A)$.
\end{pro}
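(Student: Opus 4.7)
The plan is direct: given $\Theta \in C(A)$, I will exhibit an explicit linear map $D' \in End(A)$ so that the pair $(\Theta, D')$ verifies the defining identities of a quasi-derivation on $(A,\nw,\ne)$, namely \eqref{3DenAlgQDerivation1}--\eqref{3DenAlgQDerivation2} with $D = \Theta$.

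First, I would fix $x,y,z \in A$ and read off the right-hand side of \eqref{3DenAlgQDerivation1} for $D = \Theta$: it is
$$\nw(\Theta(x),y,z) + \nw(x,\Theta(y),z) + \nw(x,y,\Theta(z)).$$
By the centroid identity \eqref{3DenAlgCentroid1}, each of these three summands equals $\Theta(\nw(x,y,z))$, so the sum collapses to $3\,\Theta(\nw(x,y,z))$. This suggests the choice $D' := 3\Theta$, for which $D'(\nw(x,y,z)) = 3\Theta(\nw(x,y,z))$ by linearity, and \eqref{3DenAlgQDerivation1} holds identically.

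Next, I would verify \eqref{3DenAlgQDerivation2} with the same candidate $D' = 3\Theta$. The argument is verbatim: by \eqref{3DenAlgCentroid2}, each of $\ne(\Theta(x),y,z)$, $\ne(x,\Theta(y),z)$, and $\ne(x,y,\Theta(z))$ equals $\Theta(\ne(x,y,z))$, so their sum is $3\Theta(\ne(x,y,z)) = D'(\ne(x,y,z))$. Hence $\Theta \in QDer(A)$ with associate $D' = 3\Theta$, and the inclusion $C(A) \subseteq QDer(A)$ follows.

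There is no substantive obstacle; the proof is a one-line observation that the quasi-derivation relation for $\Theta$ reduces to three copies of the centroid relation. The only mild point worth noting is that the choice $D' = 3\Theta$ relies on $3$ being invertible, which is guaranteed by the standing assumption that $\mathbb{K}$ has characteristic zero; over fields where $3 = 0$ one would instead have to invoke the identity in the degenerate form $D'(\nw(x,y,z)) = 0 = \nw(\Theta(x),y,z) + \nw(x,\Theta(y),z) + \nw(x,y,\Theta(z))$, but this subtlety does not arise here.
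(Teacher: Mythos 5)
Your proof is correct and is precisely the ``straightforward'' argument the paper leaves to the reader: with $D=\Theta$, take the associate $D'=3\Theta$, and the quasi-derivation identities \eqref{3DenAlgQDerivation1}--\eqref{3DenAlgQDerivation2} collapse to three applications of the centroid identities \eqref{3DenAlgCentroid1}--\eqref{3DenAlgCentroid2}. One minor remark: your caveat about invertibility of $3$ is unnecessary, since $D'=3\Theta$ is a well-defined linear map over any field (in characteristic $3$ it is simply the zero map, and the identity still holds, as you note), so no assumption on $\mathbb{K}$ is actually used.
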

\begin{proof}straightforward.
\end{proof}

\begin{pro}
Let $(A,\nw,\ne)$ be a $3$-L-Dendriform algebra and $D\in GDer(A)$ associates with $(D',D'',D''')$. Then $D$ is a  generalized derivation of associated horizontal $3$-pre-Lie algebras $(A,\{\cdot,\cdot,\cdot\}^h)$  and vertical $3$-pre-Lie algebras $(A,\{\cdot,\cdot,\cdot\}^v)$ defined in Proposition \ref{3LDendTo3PreLie} associates with $(D',D'',D''')$.
\end{pro}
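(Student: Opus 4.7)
The plan is to verify the two required identities by direct expansion, treating the horizontal and the vertical bracket in parallel. Since the statement splits cleanly into two analogous parts, I would handle the horizontal case in detail and indicate that the vertical case follows by the same bookkeeping.

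For the horizontal case, I start by writing
\begin{align*}
D'''(\{x,y,z\}^h)=D'''(\nw(x,y,z))+D'''(\ne(x,y,z))-D'''(\ne(y,x,z))
\end{align*}
using \eqref{accolade horizintal}, and then apply the two defining equations of a generalized derivation of the $3$-L-dendriform structure (with associated triple $(D',D'',D''')$), namely \eqref{3DenAlgQDerivation1} and \eqref{3DenAlgQDerivation2}, to each of the three terms on the right. In parallel, I expand the target
\begin{align*}
\{D(x),y,z\}^h+\{x,D'(y),z\}^h+\{x,y,D''(z)\}^h
\end{align*}
by unfolding \eqref{accolade horizintal} on each of its three triples. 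Comparing the two sides term-by-term, the $\nw$-contributions match directly, and the $\ne$-contributions pair up in the correct positions; the identity then reduces to matching the two representations of the $\ne$-summands with $D$, $D'$, $D''$ inserted in the various slots.

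For the vertical case I apply the same strategy to \eqref{accolade vertical}: expand $D'''(\{x,y,z\}^v)=D'''(\nw(x,y,z))+D'''(\ne(z,x,y))-D'''(\ne(z,y,x))$ via \eqref{3DenAlgQDerivation1}--\eqref{3DenAlgQDerivation2}, expand $\{D(x),y,z\}^v+\{x,D'(y),z\}^v+\{x,y,D''(z)\}^v$ via \eqref{accolade vertical}, and then match. The only difference with the horizontal case is the position of the repeated slot in $\ne$ (the first rather than the third), so the verification is mechanically identical with a shift of indices.

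The expected obstacle is purely combinatorial rather than conceptual: one must keep track of which of $D$, $D'$, $D''$ lands in which of the three arguments of each $\nw$- and $\ne$-term, and of the permutations of arguments induced by \eqref{accolade horizintal} and \eqref{accolade vertical}. Beyond this careful bookkeeping (and the skew-symmetry \eqref{3-L-dendriform0} of $\nw$ in its first two slots, which lines up the $\nw$-parts of both sides without effort), no deeper structural identity from the $3$-L-dendriform axioms is required, so the proof is essentially a tabulation of the nine terms on each side.
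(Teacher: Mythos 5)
Your strategy---expand both sides via the defining identities and match terms---is exactly the paper's own approach, but the crucial claim that the $\ne$-contributions ``pair up in the correct positions'' is false, and this is precisely where the argument breaks down. Carry the expansion out for the horizontal bracket: applying the generalized-derivation identities to $D'''(\{x,y,z\}^h)=D'''(\nw(x,y,z))+D'''(\ne(x,y,z))-D'''(\ne(y,x,z))$, the third term contributes
\begin{align*}
-\ne(D(y),x,z)-\ne(y,D'(x),z)-\ne(y,x,D''(z)),
\end{align*}
whereas expanding the target $\{D(x),y,z\}^h+\{x,D'(y),z\}^h+\{x,y,D''(z)\}^h$ produces, from the $-\ne$ part of each bracket,
\begin{align*}
-\ne(y,D(x),z)-\ne(D'(y),x,z)-\ne(y,x,D''(z)).
\end{align*}
The $D''$-terms agree and all the $\nw$-terms and positive $\ne$-terms agree, but the remaining two pairs do not: the difference between the two sides is
\begin{align*}
\ne\bigl(y,(D-D')(x),z\bigr)-\ne\bigl((D-D')(y),x,z\bigr),
\end{align*}
and since $\ne$ has no skew-symmetry in its first two arguments, nothing in the $3$-L-dendriform axioms or in the definition of a generalized derivation forces this to vanish. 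It vanishes when $D=D'$ (so derivations and quasi-derivations do descend to the horizontal $3$-pre-Lie algebra), but for a genuine generalized derivation, where $D$, $D'$, $D''$ are independent, it is an unproven extra hypothesis. The vertical case is worse: because $\{x,y,z\}^v$ places $z$ in the \emph{first} slot of $\ne$, the analogous comparison leaves residual terms pairing $D$ against $D''$ and $D'$ against $D''$ as well, so there the bookkeeping closes essentially only when $D=D'=D''$.

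To be fair, this defect is not yours alone: the paper's proof performs the very same expansion and then simply asserts the final equality (its last displayed line, which moreover transposes $x$ and $y$, is not equal to the middle expression it is claimed to equal). So your proposal faithfully reproduces the paper's route, but that route has a genuine gap. To obtain a correct statement one must either restrict to quasi-derivations, or add as a hypothesis the symmetry condition $\ne((D-D')(u),v,w)=\ne(u,(D-D')(v),w)$ for all $u,v,w$ (together with its analogues involving $D''$ for the vertical bracket), or weaken the conclusion about which triple $D$ associates with.
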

\begin{proof}
Let $x,y,z\in A$ and  using definition of bracket $\{\cdot,\cdot,\cdot\}^h$ in \eqref{accolade horizintal}, we have
\begin{align*}
  D'''(\{x,y,z\}^h)=&D'''(\nw(x,y,z)+\ne(x,y,z)-\ne(y,x,z))\\
= &\nw(D(x),y,z)+\nw(x,D'(y),z)+\nw(x,y,D''(z))\\
&+\ne(D(x),y,z)+\ne(x,D'(y),z)+\ne(x,y,D''(z))\\
&-\ne(D(y),x,z)-\ne(y,D'(x),z)-\ne(y,x,D''(z))\\
=&\{D(y),x,z\}^h+\{y,D'(x),z\}^h+\{y,x,D''(z)\}^h.
\end{align*}
Similarly, we can proof that
$$ D'''(\{x,y,z\}^v)=\{D(y),x,z\}^v+\{y,D'(x),z\}^v+\{y,x,D''(z)\}^v.$$
\end{proof}
\begin{pro}
Let $(A,\{\cdot,\cdot,\cdot\})$ be a $3$-pre-Lie algebra and $D:A\to A$ be a generalized derivation on $A$ associates with $(D',D'',D''')$ and let
$R:A\to A$ be a Rota-Baxter operator of weight $0$
commuting  with $D,D',D"$ and $D'"$ . Then $D$ is a generalized derivation of the compatible $3$-L-dendriform algebra defined in Corollary \ref{CorLDendViaRB} associates with $(D',D'',D''')$.
\end{pro}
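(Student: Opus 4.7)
The plan is to verify the two generalized-derivation identities for $\nw$ and $\ne$ directly from the explicit formulas
$\nw(x,y,z)=\{R(x),R(y),z\}$ and $\ne(x,y,z)=\{x,R(y),R(z)\}$
given in Corollary~\ref{CorLDendViaRB}, using only the hypothesis that $D$ is a generalized derivation of $(A,\{\cdot,\cdot,\cdot\})$ associated with $(D',D'',D''')$ together with the commutation relations $RD=DR$, $RD'=D'R$, $RD''=D''R$, $RD'''=D'''R$. No property of $R$ other than these commutation relations will actually be used in the derivation-checking step; the Rota-Baxter hypothesis is only needed upstream to guarantee that $(A,\nw,\ne)$ is a $3$-L-dendriform algebra in the first place.

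Concretely, I would start with $\nw$. Substituting the defining formula,
\begin{align*}
D'''(\nw(x,y,z))=D'''(\{R(x),R(y),z\}),
\end{align*}
and applying the generalized-derivation identity \eqref{TernaryGDerivation} for the $3$-pre-Lie bracket $\{\cdot,\cdot,\cdot\}$ to the arguments $(R(x),R(y),z)$ produces three terms
\begin{align*}
\{D(R(x)),R(y),z\}+\{R(x),D'(R(y)),z\}+\{R(x),R(y),D''(z)\}.
\end{align*}
The commutation of $R$ with $D$ and $D'$ lets me rewrite $D(R(x))=R(D(x))$ and $D'(R(y))=R(D'(y))$, so the three terms become exactly $\nw(D(x),y,z)+\nw(x,D'(y),z)+\nw(x,y,D''(z))$, which is the desired identity.

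The argument for $\ne$ is completely parallel: expand $D'''(\ne(x,y,z))=D'''(\{x,R(y),R(z)\})$, invoke the generalized-derivation identity at $(x,R(y),R(z))$, and then pull $R$ past $D'$ and $D''$ via the commutation assumptions to recognize the three resulting terms as $\ne(D(x),y,z)+\ne(x,D'(y),z)+\ne(x,y,D''(z))$. I do not foresee any real obstacle: the proposition is essentially a transport-of-structure statement, and everything follows by substitution once the commutation of $R$ with the quadruple $(D,D',D'',D''')$ is recorded. The only point worth flagging is that it is the commutation of $R$ with $D$, $D'$, $D''$ which is used for the inner substitutions inside the bracket, while the commutation of $R$ with $D'''$ is not actually invoked here (though it is harmlessly part of the hypothesis and would be needed if one wanted $D'''$ itself to pull through $R$ in variants of the statement).
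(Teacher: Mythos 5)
Your proposal is correct and follows exactly the paper's own proof: substitute the formulas $\nw(x,y,z)=\{R(x),R(y),z\}$ and $\ne(x,y,z)=\{x,R(y),R(z)\}$, apply the generalized-derivation identity of the $3$-pre-Lie bracket, and use the commutation of $R$ with $D$, $D'$, $D''$ to recognize the resulting terms. Your observation that the commutation of $R$ with $D'''$ is never actually needed is a small but accurate refinement of the paper's stated hypotheses.
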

\begin{proof}Let $x,y,z\in A$. Using the definition of structures  $\nw,\ne$ given in \eqref{LDendViaRB}, we have
\begin{align*}
  D'''(\nw(x,y,z)) &= D'''(\{Rx,Ry,z\}) \\
   &=\{D(Rx),Ry,z\}+\{Rx,D'R(y),z\}+\{Rx,Ry,D''(z)\} \\
   &=\nw(D(x),y,z)+\nw(x,D'(y),z)+\nw(x,y,D''(z))
\end{align*}
and
\begin{align*}
  D'''(\ne(x,y,z)) &= D'''(\{x,Ry,Rz\}) \\
   &=\{D(x),Ry,Rz\}+\{x,D'R(y),Rz\}+\{x,Ry,D''(Rz)\} \\
   &=\ne(D(x),y,z)+\ne(x,D'(y),z)+\ne(x,y,D''(z)).
\end{align*}
\end{proof}

\begin{pro}
Let $(A,\nw,\ne)$ be a $3$-L-dendriform algebra. Then
\begin{itemize}
\item [a)]
$D(A)\oplus C(A) \subset QD(A)$.
\item [b)]
$QD(A)+QC(A) \subset GD(A)$.
\end{itemize}
\end{pro}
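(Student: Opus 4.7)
The plan is to verify both inclusions by producing, for each candidate element on the left-hand side, an explicit auxiliary map (or triple of auxiliary maps) that witnesses membership on the right-hand side. Both parts then boil down to algebraic identities that exploit the defining properties of derivations, centroids, quasi-derivations and quasi-centroids, applied to both structure maps $\nw$ and $\ne$ separately.

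For part (a), I would fix $D\in Der(A)$ and $\Theta\in C(A)$ and set $E:=D+\Theta$. To show $E\in QDer(A)$, I need to exhibit a linear map $E'$ satisfying the quasi-derivation identity for both $\nw$ and $\ne$. Expanding $\nw(E(x),y,z)+\nw(x,E(y),z)+\nw(x,y,E(z))$ and splitting each term into its $D$- and $\Theta$-contributions, the $D$-part collapses to $D(\nw(x,y,z))$ by the derivation identity, while the three $\Theta$-terms are all equal to $\Theta(\nw(x,y,z))$ by the centroid identity. Thus $E'=D+3\Theta$ does the job; the identical calculation works for $\ne$.

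For part (b), I would fix $D\in QDer(A)$ with associated map $D'$ and $\Theta\in QC(A)$, and set $E:=D+\Theta$. The candidate triple is $(E',E'',E''')=(D-\Theta,\,D,\,D')$. Plugging into the generalized-derivation identity gives $\nw(D(x),y,z)+\nw(\Theta(x),y,z)+\nw(x,D(y),z)-\nw(x,\Theta(y),z)+\nw(x,y,D(z))$; the quasi-derivation identity collapses the three $D$-terms to $D'(\nw(x,y,z))$, and the quasi-centroid identity makes $\nw(\Theta(x),y,z)$ and $\nw(x,\Theta(y),z)$ cancel. The analogous calculation for $\ne$ uses only the $\ne$-versions of these identities and goes through verbatim.

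The computations are entirely routine; the only mildly delicate point — and what I would call the main design choice rather than an obstacle — is picking the signs in the triple $(D-\Theta, D, D')$ so that the centroid-like terms cancel in pairs via the quasi-centroid identity $\nw(\Theta(x),y,z)=\nw(x,\Theta(y),z)=\nw(x,y,\Theta(z))$. Any permutation of the ``minus'' into a different slot (e.g.\ $(D,\,D-\Theta,\,D')$ or $(D,\,D,\,D'-\Theta\circ(\text{?}))$) would work equally well by symmetry, so the proof is essentially a matter of unwinding definitions once the correct witnessing maps are in hand.
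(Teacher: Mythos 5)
Your proposal is correct and coincides with the paper's intent: the authors' entire proof is the single word ``Straightforward,'' and your computation—exhibiting the explicit witnesses $E'=D+3\Theta$ for part (a) and the triple $(D-\Theta,\,D,\,D')$ for part (b), verified for both $\nwarrow$ and $\nearrow$—is exactly the routine check being left to the reader. The one point worth keeping explicit, which your argument does handle, is that the \emph{same} witness map (resp.\ triple) must serve simultaneously for the two products $\nwarrow$ and $\nearrow$, since the definitions of quasi-derivation and generalized derivation in this paper demand a common associated map for both identities.
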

\begin{proof}
Straightforward.
\end{proof}


\end{document}